%% file: domination.tex
\documentclass[12pt]{article}

\usepackage[utf8]{inputenc}
\usepackage{amsmath}
\usepackage{color}
\usepackage{xypic}
\usepackage{graphicx}
\usepackage{float}
\usepackage{ifpdf}
\usepackage{url}
\usepackage{enumerate}
\usepackage{xspace}
\usepackage{multirow}
\usepackage{hhline}
\usepackage{microtype}
\usepackage{lmodern}
\usepackage{tabularx}
\usepackage{tikz}
\usepackage{tkz-euclide}
\usetikzlibrary{calc, positioning, fit, arrows.meta, decorations.markings}

\usepackage{amsmath, amsfonts, amsthm, amssymb, graphicx, hyperref, tikz} 
\usepackage[numbers,square, comma, sort&compress]{natbib}
\usepackage[english]{babel}

\usepackage[a4paper, margin=2.5cm]{geometry}

\newcommand{\dom}[1]{\gamma\left(#1\right)}
\newcommand{\idom}[1]{\gamma_i\left(#1\right)}
\newcommand{\domfrac}[1]{\frac{\gamma_i\left(#1\right)}{\gamma\left(#1\right)}}
\newcommand{\domfracC}[1]{\frac{\gamma_i}{\gamma}({#1})}
\newcommand{\domfracCinf}[1]{\frac{\gamma_i}{\gamma}({#1})_{\infty}}

\usepackage{booktabs} 
\usepackage{makecell} 

\newtheorem{theorem}{Theorem}
\newtheorem{lemma}[theorem]{Lemma}

\newtheorem{corollary}[theorem]{Corollary}

\newtheorem{definition}{Definition}

\numberwithin{equation}{section}

\author{Gunnar Brinkmann, Steven Van Overberghe\\[4pt]
\normalsize Department of Mathematics, Computer Science, and Statistics, Ghent University, Belgium}

\title{Bounds for the ratio between the domination number and the independent domination number}

\date{}

\newcommand{\keywords}[1]{\par\smallskip\noindent\textbf{Keywords:} #1}
\begin{document}

\maketitle
\begin{abstract}
  In this article we present new and improved results for the ratio between the independent domination number and the domination number in graphs with bounded degree.
  We present a general formula, that, for a fixed maximum degree, allows to compute an upper bound for this ratio as a function of an upper bound $\beta |V|$ for the independent domination number. We also apply this formula to several known upper bounds for the independent domination number.
  Furthermore we present constructions giving lower bounds for the best possible upper bound in various classes of graphs with bounded degree.
\end{abstract}

\keywords{graph, domination, independent domination, bounded degree}


\section{Introduction}

A dominating set $D$ in a graph $G=(V,E)$ is a set of vertices, so that each $v\in V$ is in $D$ or has a neighbour in $D$. Domination is an extremely well
studied topic in graph theory with many applications. Instead of giving only a tiny glimpse of the field, we refer the reader to \cite{dombook} for a
survey. From a local or greedy point of view, independent sets seem to be good candidates for small dominating sets, as neighbouring vertices necessarily
produce doubly dominated vertices, but looking e.g.\ at a tree with $n$ vertices consisting of two neighbouring vertices with degree $\frac{n}{2}$, it is
obvious that an independent set dominating all vertices can sometimes be much larger than a dominating set, where independence is not required.  In fact in this
tree example, the ratio $\domfrac{G}$ with $\idom{G}$ the size of a smallest independent dominating set and $\dom{G}$ the size of a smallest dominating set of
$G$ is not even bounded from above.  Graphs with large $\idom{G}$ are often graphs that are simply {\em hard to dominate} --
that is: graphs that also have a large value of $\dom{G}$. Nevertheless there are also graphs -- like our example with the tree -- that are easy to dominate --
but not with an independent set. So we are interested in bounds for the ratio of $\domfrac{G}$ for graphs in certain classes of graph. Or to be exact:

\begin{definition}
  For a nonempty class $\cal C$ of graphs we define

  $\domfracC{\cal C}= \sup \{ \domfrac{G} | G\in \cal C\}$.
    
\end{definition}

While in the tree example the reason for the difference is clear: some vertices can dominate much more vertices than
others -- this is not the case for regular graphs and only the case in a limited way for graphs with bounded degree. For fixed maximum degree, it is also easy
to see that there is an upper bound on $\domfrac{G}$ -- but determining a good upper bound is a difficult problem. Some upper bounds for this ratio have been
determined in earlier papers -- e.g.~ in \cite{indepdom_no4}, \cite{indepdom_regular}, or \cite{dom-idom-cubic}. In this article we give theoretical as well as 
  computational new results on $\domfracC{\cal C}$ for various classes ${\cal C}$.

  \section{Upper bounds for $\domfracC{\cal C}$}

 \begin{definition}
  For a graph $G=(V,E)$ and $D\subseteq V$, we denote the subgraph induced by $D$ as $G[D]$.

  For a class ${\cal C}$ of graphs with bounded degree, $\sigma({\cal C})$ denotes the smallest real number, so that for each graph $G\in {\cal C}$ and
  each dominating set $D$ of $G$ with $n_0$ vertices of degree $0$ in $G[D]$ and $n_{p}$ vertices of strictly positive degree in $G[D]$
  (so $|D|=n_0+n_{p}$) there is an independent dominating set $D_i$ with $|D_i|\le |D|+\sigma({\cal C}) n_{p}$.

  For $k \in \mathbb{N}$ and ${\cal C}_k$ the class of all graphs with maximum degree $k$, we define $\sigma_k=\sigma({\cal C}_k)$.
  
\end{definition}

 The following lemma makes sure that some technical requirement depending on a value $\beta$ in
 Theorem~\ref{thm:fraction} is always fulfilled for regular graphs when applied for $\beta\le \frac{1}{2}$, which is the case for all interesting cases in which the
 lemma will be applied.  

\begin{lemma}

  For each $k$ we have $\sigma_k\ge \frac{k-2}{2}$, so if in a $k$-regular graph an independent dominating set has size $\beta n$, $\beta\le \frac{1}{2}$, then $1+\sigma_k \ge k\beta$.

\end{lemma}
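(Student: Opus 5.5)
We need a graph with maximum degree $k$ and a dominating set $D$ such that every independent dominating set is large compared to $|D|$ and $n_p$. The natural candidate is the complete bipartite graph $K_{k,k}$ together with a dominating set consisting of one edge. Then $D=\{a,b\}$ with $a$ and $b$ on opposite sides, so $|D|=2$, $n_0=0$ and $n_p=2$. In $K_{k,k}$ an independent set lies entirely in one side. To dominate the other side it must therefore contain the whole side, so it has size exactly $k$. The definition of $\sigma_k$ then requires
\[
k\le |D|+\sigma_k n_p = 2+2\sigma_k,
\]
which gives $\sigma_k\ge \frac{k-2}{2}$. This is the first part of the lemma; $K_{k,k}$ clearly has maximum degree $k$.

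For the second part we simply chain inequalities. Since $\beta\le\frac12$, we have $k\beta\le \frac{k}{2}$. By the first part, $1+\sigma_k\ge 1+\frac{k-2}{2}=\frac{k}{2}\ge k\beta$. The hypothesis about a $k$-regular graph having an independent dominating set of size $\beta n$ plays no role beyond supplying $\beta\le\frac12$.

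The only step needing care is confirming that the example really forces $|D_i|\ge k$, i.e.\ that no smaller independent dominating set exists. In $K_{k,k}$, an independent set containing vertices of both sides would contain an edge, so it lies in one side. If that side is not complete, some vertex of that side is neither in the set nor adjacent to any member of it, since its only neighbours are on the other side. So the minimum is exactly $k$. If one worries about degenerate small $k$ ($k\le 2$), the bound $\frac{k-2}{2}\le 0$ is trivial, since $\sigma_k\ge 0$ by taking $D_i$ for $D$ independent.
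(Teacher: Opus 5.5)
Your proof is correct and matches the paper's argument: the paper just says to check the complete bipartite graph, and you fill in the details with $K_{k,k}$ and a single edge as $D$ (forcing $k\le 2+2\sigma_k$), followed by the same chain $1+\sigma_k\ge \frac{k}{2}\ge k\beta$. Your closing aside on small $k$ should be dropped: the $K_{k,k}$ example already covers every $k\ge 1$, and the justification is wrong, because an independent $D$ has $n_p=0$ and so imposes no constraint on $\sigma_k$ (indeed $\sigma_1=-\frac{1}{2}<0$, since for a matching the minimum independent dominating set has size $|D|-\frac{n_p}{2}$).
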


\begin{proof}

  The bound $\sigma_k\ge \frac{k-2}{2}$ can already be seen by checking the complete bipartite graph.
  This gives
  $1+\sigma_k \ge 1+\frac{k-2}{2} = k \cdot \frac{1}{2} \ge k\beta$.

\end{proof}

If the graph is not regular -- e.g. a $k_1$-clique in which each vertex is also adjacent to $k-k_1+1$ vertices of degree $1$, then $\beta$ can be close to $1$ and $1+\sigma_k < k\beta$.

The following theorem uses and generalizes techniques of proofs from \cite{dom-idom-cubic} (Theorem 3), \cite{indepdom_no4} (Theorem 1.8), and \cite{indepdom_regular} (Theorem 1.4).

\begin{theorem}\label{thm:fraction}
	Let $G$ be a graph on $n$ vertices in a class ${\cal C}$ and $\idom{G}\leq\beta n$.\\
	Let $\sigma'\ge \sigma({\cal C})$. If $1+\sigma' \ge k\beta$, then
	\[
	\domfrac{G}\leq \frac{\beta(\sigma' k + \sigma' + 1)}{\beta + \sigma'}
	\]
	and in each case \(\domfrac{G}\leq 1+\sigma'\).
\end{theorem}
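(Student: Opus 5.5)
Take a minimum dominating set $D$ of $G$, so $|D|=\dom{G}$, and write $|D|=n_0+n_p$ as in the definition of $\sigma({\cal C})$. Since $\sigma'\ge\sigma({\cal C})$, there is an independent dominating set of size at most $|D|+\sigma' n_p$, so
\[
\idom{G}\le |D|+\sigma' n_p\le (1+\sigma')|D| .
\]
This already gives the unconditional bound $\domfrac{G}\le 1+\sigma'$. It also gives the hypothesis $\idom{G}\le\beta n$. To improve on $1+\sigma'$ we must show that a large $n_p$ forces $n$ to be small relative to $|D|$. Then the bound $\beta n$ becomes competitive, and we take the minimum of the two bounds.

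The key counting step is this. Here $k$ is the maximum degree of the graphs in the class, as in the definition of $\sigma_k$. Every vertex of $D$ with positive degree in $G[D]$ uses at least one of its at most $k$ edges inside $D$, so it dominates at most $k-1$ vertices outside $D$. Every isolated vertex of $G[D]$ dominates at most $k$ outside vertices. Since $D$ dominates $V$,
\[
n\le |D| + k n_0 + (k-1)n_p = (k+1)|D| - n_p .
\]
Write $x=n_p/|D|\in[0,1]$. We get two bounds:
\[
\idom{G}\le (1+\sigma' x)|D| \qquad\text{and}\qquad \idom{G}\le \beta n\le \beta(k+1-x)|D| .
\]
The first increases in $x$ and the second decreases, so $\domfrac{G}\le\max_{x\in[0,1]}\min\{1+\sigma' x,\ \beta(k+1-x)\}$.

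The maximum of the minimum is attained where the two lines cross. Setting $1+\sigma'x=\beta(k+1-x)$ gives
\[
x^*=\frac{\beta(k+1)-1}{\sigma'+\beta}.
\]
Substituting, the value is $1+\sigma' x^*=\frac{\beta+\sigma'+\sigma'\beta(k+1)-\sigma'}{\beta+\sigma'}=\frac{\beta(\sigma' k+\sigma'+1)}{\beta+\sigma'}$, which is the claimed bound.

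The main obstacle is checking that the crossing point lies in the admissible range, and handling the cases where it does not. The condition $x^*\le 1$ is equivalent to $\beta(k+1)-1\le\sigma'+\beta$, i.e.\ $k\beta\le 1+\sigma'$, which is exactly the hypothesis of the theorem. If $x^*<0$, the minimum is the decreasing line $\beta(k+1-x)$ on all of $[0,1]$, so the maximum is attained at $x=0$ with value $\beta(k+1)$. We then need $\beta(k+1)$ to be at most the stated formula. This holds because the formula minus $\beta(k+1)$ equals $\beta(1-\beta(k+1))/(\beta+\sigma')\ge 0$ when $\beta(k+1)<1$. So the bound holds in every case.
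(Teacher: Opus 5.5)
Your proof is correct and is essentially the paper's argument. The paper uses the same three ingredients: the $\sigma'$ bound, $\gamma(G)=n_0+n_p$, and $n\le (k+1)n_0+kn_p$. It splits into the cases $n_0\ge \alpha n_p$ and $n_0\le\alpha n_p$, which give exactly your two lines $1+\sigma' x$ and $\beta(k+1-x)$ under $x=1/(1+\alpha)$, and balances them at the same crossing point. Your max--min formulation is a slightly cleaner packaging of this and also handles explicitly the boundary case $\beta(k+1)<1$, which the paper skips; that case is in fact vacuous, since $n\le (k+1)\gamma(G)\le (k+1)\beta n$.
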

\begin{proof}

  Let $D$ be a minimum dominating set in the graph $G$ with $n$ vertices.
  Let $D_0$ be the set of $n_0$ vertices of degree $0$ in $G[D]$ and $D_{p}$ be the set of $n_{p}$ vertices of degree at least $1$ in $G[D]$.
  
We distinguish two cases: if $n_0$ is large, the dominating set is {\em almost independent} and we use the bound we get from $\sigma'$.  If $n_0$ is small, we
use the fact that non-isolated vertices cannot dominate too many other vertices, so that in fact already $\dom{G}$ is relatively large.  We assume a function
$\alpha(k, \sigma',\beta)$, so that $\alpha()\cdot n_p$ is the border between these two cases. The exact value of $\alpha(k, \sigma',\beta)$ will be
determined later.

We use
\begin{description}
\item[(a)] $\idom{G}\leq n_0 + n_{p} + \sigma'\cdot n_{p}$
\item[(b)] $\dom{G}= n_0 + n_{p} $
\end{description}

As each vertex in $D_0$ dominates at most $k+1$ vertices (including itself) and each vertex in $D_{p}$ dominates at most $k$ vertices different from its neighbours in $D$ (but including itself) we have 
\begin{description}
  \item[(c)]: $n\le  (k+1)n_0 + kn_{p}$ and therefore
\item[(d)] $n_0 \ge \frac{n-kn_{p}}{k+1}$
\end{description}
	
\begin{description}
		\item[{\bf case 1:} $n_0 \ge \alpha\cdot n_{p}$]
\begin{align*}
  \domfrac{G} & \overset{(a),(b)}{\le} \frac{n_0+n_{p}+\sigma'\cdot n_{p}}{n_0+n_{p}}=1+\frac{\sigma'\cdot n_{p}}{n_0+n_{p}} \\
   &  \overset{\mbox{\tiny{case 1}}}{\le}  1+\frac{\sigma'\cdot n_{p}}{(1+\alpha)n_{p}} =  \frac{1+ \alpha + \sigma'}{1+\alpha} 
\end{align*}

	\item[{\bf case 2:} $n_0 \leq \alpha\cdot n_{p}$] 

  With (b) and (d) we get
\begin{description}
\item[(e)] $\dom{G}\ge \frac{n-kn_{p}}{k+1} + n_{p} = \frac{n + n_{p}}{k+1}$
  \end{description}

and in case 2 with (c) also

\begin{description}
\item[(f)] $n\le \alpha (k+1) n_{p} + k n_{p}$ and therefore $n_{p} \ge \frac{n}{(\alpha +1) k+\alpha}$
  \end{description}

  Using the assumption of the theorem, we get

  \begin{align*}
  \domfrac{G} & \overset{(e)}{\le} \frac{(k+1) \beta n }{n+n_{p}} \overset{(f)}{\le}  \frac{(k+1) \beta n }{n + \frac{n}{(\alpha+1)k +\alpha}} \\
  &   = \frac{((\alpha +1)k + \alpha)(k+1)\beta}{(\alpha +1)k + \alpha +1} = \frac{((\alpha +1)k + \alpha)(k+1)\beta}{(\alpha +1)(k + 1)} \\
  & = \frac{((\alpha+1)k+\alpha)\beta}{\alpha+1}
\end{align*}
  
\end{description}

The bound $f_1(\alpha)=\frac{1+ \alpha + \sigma'}{1+\alpha}$ is decreasing with $\alpha$, while the bound
$f_2(\alpha)=\frac{((\alpha+1)k+\alpha)\beta}{\alpha+1}$ is increasing with $\alpha$, but for $\alpha=0$ and $1+\sigma' \ge k\beta$ -- as required by the theorem -- we have
$f_1(0) \ge  f_2(0)$, so we get the best bounds, if we choose alpha so that $f_1(\alpha) = f_2(\alpha)$, which is the case for

	\[
	\alpha = \frac{\sigma' - \beta k + 1}{\beta k + \beta - 1}
	\]
Inserting this into $f_1()$ or $f_2()$, we get the bound from the theorem.

If the condition $1+\sigma' \ge k\beta$ is not fulfilled, $f_1(\alpha) <  f_2(\alpha)$, so the best choice is to make sure that we always have case 1, so choose $\alpha=0$. But this gives the trivial bound $\domfrac{G}\leq 1+\sigma'$ which is immediate by the definition of $\sigma'$.
\end{proof}

While the formula looks too complicated to be attractive, it will soon turn out to be very useful in its generality. 
We will now determine an upper bound for $\sigma_k$. The bound is in fact the largest value one can get by replacing all but one vertices of a clique in $G[D_p]$ by their neighbours.
The Gemini Deep Think AI contributed to the following lemma.

\begin{theorem}\label{thm:delta}
 ~\\
  Let $k\ge 3$ and ${\cal C}_{k,\chi}$ be the class of graphs with maximum degree $k$ and chromatic number at most $\chi$ and let $\sigma_{k,\chi}=\sigma({\cal C}_{k,\chi})$.
  Then $\sigma_{k,\chi} \le \max_{1\le d < \chi } \frac{d (k-1-d)}{d+1}$ with $d\in \mathbb{N}$.

\end{theorem}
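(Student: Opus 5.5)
The plan is to start from $D$, choose a maximal independent subset $I\subseteq D$ with $D_0\subseteq I$, delete the vertices of $D_p\setminus I$, and repair domination by adding vertices outside $D$. The choice of $I$ will be made by a random ordering of the colour classes, and an averaging argument will then show that some choice meets the bound. This follows the remark before the statement: the extremal picture is a clique in $G[D_p]$ in which all but one vertex is replaced by its neighbours.

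\textbf{Step 1 (cost of removing one vertex).} Let $I\subseteq D$ be independent and maximal in $G[D]$, with $D_0\subseteq I$. This is possible because vertices of $D_0$ have no neighbours in $D$. Let $d_v\ge 1$ be the degree of $v\in D_p$ in $G[D]$; since $D_0$ has no $D$-neighbours, this equals its degree in $G[D_p]$. By maximality every $v\in D\setminus I$ has a neighbour in $I$, so $v$ is still dominated. The vertices that may become undominated lie outside $D$ and are neighbours of removed vertices, so there are at most $\sum_{v\in D_p\setminus I}(k-d_v)$ of them. None of them is adjacent to $I$. Hence we can add a maximal independent set $J$ of the undominated vertices, and $I\cup J$ is an independent dominating set. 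Its size is at most
\[|D|+\sum_{v\in D_p\setminus I}(k-1-d_v),\]
because each removed vertex saves $1$ and adds at most $k-d_v$.

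\textbf{Step 2 (random greedy choice of $I$).} Fix a proper colouring of $G$ with at most $\chi$ colours. Take a uniformly random ordering of the colours. Start with $I=D_0$ and process the colour classes of $D_p$ in that order, greedily adding each vertex that has no neighbour already in $I$. Within one class the vertices are pairwise non-adjacent, so the result is independent and maximal. Let $c_v$ be the number of distinct colours on the $D_p$-neighbours of $v$, so $1\le c_v\le\min(d_v,\chi-1)$. If the colour of $v$ precedes all $c_v$ neighbouring colours, then no neighbour of $v$ is in $I$ when $v$ is processed, so $v$ is kept. Therefore
\[\Pr[v\notin I]\le \frac{c_v}{c_v+1}.\]

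\textbf{Step 3 (averaging).} The expected extra cost of $v$ is at most $\frac{c_v}{c_v+1}(k-1-d_v)$. If $k-1-d_v\le 0$ this is $\le 0$; in that case the bound could only fail if every such term is non-positive, and an extra cost $\le 0$ is trivially within the claim as long as the maximum is nonnegative, which holds for $d=1$ since $k\ge3$. Otherwise $d_v\ge c_v$ gives $k-1-d_v\le k-1-c_v$, so the term is at most $\frac{c_v(k-1-c_v)}{c_v+1}\le \max_{1\le d<\chi}\frac{d(k-1-d)}{d+1}$. Summing over $D_p$, the expected size of $I\cup J$ is at most $|D|+\sigma n_p$ with $\sigma$ the stated maximum, so some ordering achieves this.

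\textbf{Main obstacle.} The delicate step is Step 2, specifically the claim that greedy processing by colour classes keeps $v$ whenever its colour comes first among its neighbourhood's colours. This is what converts the chromatic bound $\chi$ into the cap $c_v\le\chi-1$. The remaining care is in the bookkeeping: $d_v$ must be measured in $G[D]$, so that $v$ has at most $k-d_v$ neighbours outside $D$, and the monotonicity replacement $d_v\to c_v$ is only used when $k-1-d_v>0$.
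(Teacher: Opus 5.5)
Your proof is correct, and it follows a genuinely different route from the paper. The paper argues by induction on $|D_p|$. It picks a vertex $v$ of minimum degree $d_m$ in $G[D_p]$. If $d_m<\chi$, it deletes only the $d_m$ neighbours of $v$ in $D_p$; this costs at most $d_m(k-1-d_m)$ and makes $d_m+1$ vertices of $D_p$ isolated, so the cost per vertex leaving $D_p$ is at most $\frac{d_m(k-1-d_m)}{d_m+1}$. If $d_m\ge\chi$, it keeps a largest colour class $C$ of $G[D_p]$ and deletes the rest, each deleted vertex costing at most $k-1-\chi$.

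You replace this induction and case split with a single global choice: a greedy maximal independent set of $G[D]$ built along a uniformly random order of the colours, combined with the Caro--Wei type estimate $\Pr[v\notin I]\le\frac{c_v}{c_v+1}$ and linearity of expectation. This buys several things.
\begin{itemize}
\item There is no induction and no case distinction.
\item Maximality of $I$ automatically keeps every deleted vertex dominated. In the paper's second case, a vertex of $D_p\setminus C$ need not have a neighbour in $C$, so that step tacitly requires enlarging $C$ to a maximal independent set of $G[D_p]$.
\item The argument stays constructive, since at most $\chi!$ orders need to be tried.
\item It gives the finer, graph-dependent bound $\sum_{v\in D_p}\max\{0,\frac{c_v(k-1-d_v)}{c_v+1}\}$ on the extra cost.
\end{itemize}
In return, the paper's route mirrors the extremal clique picture directly, and in its main case it does not use the colouring at all.

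One small repair is needed in Step 3. You assert that the expected extra cost of $v$ is at most $\frac{c_v}{c_v+1}(k-1-d_v)$, but this is false when $d_v=k$. Multiplying $\Pr[v\notin I]\le\frac{c_v}{c_v+1}$ by the negative number $k-1-d_v$ reverses the inequality, so that expression is then a lower bound. What you actually need, and what holds, is that the expected cost of $v$ is at most $\max\{0,\frac{c_v}{c_v+1}(k-1-d_v)\}$. The $0$ is absorbed because the maximum in the statement is at least $\frac{k-2}{2}>0$; the value $d=1$ is admissible since $D_p\neq\emptyset$ forces $\chi\ge 2$. Replace your sentence about the bound ``failing only if every such term is non-positive'' with this.
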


\begin{proof}

 Let $G \in {\cal C}_{k,\chi}$ and $D$ a minimum dominating set. As before, let $D_0$ denote the set of isolated vertices in $G[D]$ and $D_p$ denote the set of vertices with degree at least $1$ in $G[D]$.

  Let $w=\max _{1\le d < \chi}\frac{d (k-1-d)}{d+1}$. We have to prove that there is an independent dominating set of size at most $|D|+w|D_p|$. We will give a constructive proof by induction on $|D_p|$. For $|D_p|=0$,
  this is immediate, so assume that $|D_p|>0$ and that the result has been proven for dominating sets $D'$ if for the corresponding $D'_p$ we have $|D'_p|<|D_p|$.

  Let $v$ be a vertex of minimum degree $d_m$ in $G[D_p]$.

  {\bf case 1:} Assume that -- with $\deg()$ the degree in $G[D_p]$ -- we have $\deg(v)=d_m<\chi$:

  We construct $D'$ by removing all neighbours $v_1,\dots ,v_{d_m}$ of $v$ in $G[D_p]$ from $D$ and add
  an independent dominating set of their neighbours outside $D_p$ that are not dominated otherwise.  In the worst case this might be all these neighbours. So we have:

  \begin{align*}
     |D'| & = |D| -d_m + \sum_{i=1}^{d_m} (k-\deg(v_i))= |D| + \sum_{i=1}^{d_m} (k-1-\deg(v_i))\le |D| + \sum_{i=1}^{d_m} (k-1-d_m) \\
     & = |D| + d_m(k-1-d_m) = |D| + \frac{d_m(k-1-d_m)}{d_m+1}(d_m+1)\le  |D| + w(d_m+1)
  \end{align*}
  
  On the other hand, neither $v$ nor its neighbours or the newly chosen vertices have neighbours in $D'$, so $|D'_p|\le |D_p|-(d_m+1)$.

  By induction, for $D'$ there is an independent dominating set $D_i$ of size at most $|D'|+w|D'_p|\le |D'|+w(|D_p|-(d_m+1))$. Putting this together we get

  $|D_i| \le  |D| + w(d_m+1) +w(|D_p|-(d_m+1)) =  |D| + w|D_p|$ proving the theorem.

  \bigskip

  {\bf case 2:} Assume that we have $\deg(v)\ge \chi$:
  As $G$ is colourable with $\chi$ colours, so is the graph $G[D_p]$ induced by $D_p$. Assume that it is coloured with at most $\chi$ colours and let
  $C$ be a colour class of $D_p$ with a maximum number of elements, so $|C|\ge \frac{|D_p|}{\chi}$. We will remove all at most $\frac{\chi-1}{\chi}|D_p|$ elements of $D_p\setminus C$ from $D$ and
  add an independent dominating set of their neighbours outside $D_p$ that are not dominated otherwise. In the worst case this might be all these neighbours and
  each vertex has $k-\chi$ neighbours outside of $D_p$.

  So we get for the resulting independent dominating set $D'$ of $G$:

   \begin{align*}
     |D'| & \le |D| - \frac{\chi-1}{\chi}|D_p| + \frac{\chi-1}{\chi}|D_p| (k-\chi)\\
     & < |D| + \frac{(\chi-1)(k-1-(\chi-1))}{\chi} |D_p|\le |D| + w|D_p| \\
  \end{align*}

\end{proof}

\begin{corollary}\label{cor:sigma2}
  ~\\
  \vspace*{-4mm}
\begin{description}
\item[(a)] Due to Brooks' theorem we have $\sigma_k=\sigma_{k,k}$.
\item[(b)] For bipartite graphs with maximum degree $k$ we have $\sigma({\cal C}_{k,2}) = \frac{k-2}{2}$.
  \end{description}
\end{corollary}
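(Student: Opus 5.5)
Both parts follow from Theorem~\ref{thm:delta} together with elementary observations. For (a) the extra ingredient is Brooks' theorem, which I use to reduce to the $k$-colourable case. For (b) it is the matching lower bound from the complete bipartite graph, as in the lemma preceding Theorem~\ref{thm:fraction}. Throughout I assume $k\ge 3$, as in Theorem~\ref{thm:delta}.

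\textbf{Part (a).} Since ${\cal C}_{k,k}\subseteq {\cal C}_k$, every graph of ${\cal C}_{k,k}$ is one the definition of $\sigma_k$ has to handle, so $\sigma_{k,k}\le\sigma_k$ is immediate. For the reverse inequality, Brooks' theorem says that a connected graph of maximum degree $k\ge 3$ other than $K_{k+1}$ is $k$-colourable. A graph of maximum degree $k$ containing a $K_{k+1}$ must have it as a whole component. So every $G\in{\cal C}_k$ splits as $G=G'\cup K^1\cup\dots\cup K^t$, where $G'\in{\cal C}_{k,k}$ and the $K^j$ are $K_{k+1}$-components.

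Given a dominating set $D$ of $G$, let $D'=D\cap V(G')$. Its non-isolated vertices in $G'[D']$ are exactly the non-isolated vertices of $G[D]$ lying in $G'$; call their number $n_p'$. By definition of $\sigma_{k,k}$ there is an independent dominating set $D_i'$ of $G'$ with $|D_i'|\le |D'|+\sigma_{k,k}n_p'$. In each $K^j$ the set $D\cap K^j$ is nonempty, and I keep just one of its vertices. This costs at most $|D\cap K^j|$ and never increases anything. The union $D_i$ of $D_i'$ with these $t$ vertices is independent and dominating, and $|D_i|\le |D|+\sigma_{k,k}n_p$ because $n_p\ge n_p'$. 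Hence $\sigma_k\le\sigma_{k,k}$.

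\textbf{Part (b).} The upper bound is Theorem~\ref{thm:delta} with $\chi=2$. The maximum then ranges only over $d=1$, giving $\frac{1\cdot(k-2)}{2}=\frac{k-2}{2}$.

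For the lower bound, take $K_{k,k}$, which is bipartite with maximum degree $k$. Let $D=\{u,v\}$ with $u,v$ adjacent; this is dominating with $n_0=0$ and $n_p=2$. An independent set in $K_{k,k}$ lies within one side, and it dominates the vertices of that side only if it is the whole side. So every independent dominating set has size $k$. The definition of $\sigma$ then forces $k\le 2+2\sigma({\cal C}_{k,2})$, i.e.\ $\sigma({\cal C}_{k,2})\ge\frac{k-2}{2}$.

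The only point needing care is in (a). The definition of $\sigma$ quantifies over all dominating sets, not only minimum ones, so the reduction must work for arbitrary $D$ and must respect the decomposition of $n_p$ over components. The component-wise argument above does exactly that.
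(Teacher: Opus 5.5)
Your proposal is correct and follows the route the paper intends: for (a), Brooks' theorem reduces ${\cal C}_k$ to ${\cal C}_{k,k}$ up to $K_{k+1}$-components, which you handle correctly for arbitrary (not only minimum) dominating sets; for (b), you combine Theorem~\ref{thm:delta} with $\chi=2$ and the $K_{k,k}$ example from the lemma before Theorem~\ref{thm:fraction}. Two tacit points are worth stating: passing from $\sigma_{k,k}n_p'$ to $\sigma_{k,k}n_p$ in (a) needs $\sigma_{k,k}\ge 0$, which follows from your bound in (b) since $K_{k,k}\in{\cal C}_{k,2}\subseteq{\cal C}_{k,k}$, and components of maximum degree below $k$ are $k$-colourable by greedy colouring rather than by Brooks.
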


Of course $\sigma_k$ can be approximated interpreting $d$ as real valued and analytically determining the maximum for that function. Nevertheless, especially for small values, the fact that the maximum is only about
integer values of $d$ has an effect that should not be neglected -- and the exact maximum can be computed very easily. We will now formulate a
corollary for some small $k$ that will help to give upper bounds for the ratio $\domfrac{G}$ for various classes of graph.

\begin{corollary}\label{cor:domfrac}
  Let $G=(V,E)$ be a graph with maximum degree $k$ and $\idom{G}\le \beta |V|$.

   \begin{description}

  \item[For $3\le k\le 6$:]  $\sigma_k \le \frac{k-2}{2}$ and  if $\beta \le \frac{1}{2}$: $\domfrac{G} \le \frac{\beta(k^2-k) }{2\beta+  k -2}$.

  \item[For $7\le k\le 12$:]  $\sigma_k \le \frac{2(k-3)}{3}$ and   if $\beta \le \frac{2k-3}{3k}$: $\domfrac{G} \le \frac{\beta(2k^2-4k-3) }{3\beta+  2k -6}$.

   \item[For general $k$:]    $\sigma_k \le (\sqrt{k}-1)^2$ and  if $\beta \le \frac{1+(\sqrt{k}-1)^2}{k}$: $\domfrac{G} \le \frac{\beta (k^2-2(k\sqrt{k}-k+\sqrt{k}-1))}{\beta+  (\sqrt{k}-1)^2}$
      
    \end{description}

  \end{corollary}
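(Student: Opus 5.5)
The plan is to combine Theorem~\ref{thm:delta} with Corollary~\ref{cor:sigma2}(a) to bound $\sigma_k$. We then plug that bound into Theorem~\ref{thm:fraction} with $\sigma'$ equal to it. In each range of $k$ the condition on $\beta$ is exactly $1+\sigma'\ge k\beta$, so the first formula of Theorem~\ref{thm:fraction} applies.

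\textbf{Step 1: bounding $\sigma_k$.} By Corollary~\ref{cor:sigma2}(a), $\sigma_k=\sigma_{k,k}\le \max_{1\le d<k} g(d)$ with $g(d)=\frac{d(k-1-d)}{d+1}$. Write $g(d)=k-d-\frac{k}{d+1}$. The real function is concave on $d>-1$ and has its maximum at $d+1=\sqrt{k}$. The value there is $k+1-2\sqrt{k}=(\sqrt{k}-1)^2$, which gives the general bound.

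For small $k$ we compare integer values directly. We have $g(1)=\frac{k-2}{2}$, $g(2)=\frac{2(k-3)}{3}$ and $g(3)=\frac{3(k-4)}{4}$. Then $g(2)\le g(1)$ iff $4k-12\le 3k-6$, i.e.\ $k\le 6$. Also $g(3)\le g(2)$ iff $9k-36\le 8k-24$, i.e.\ $k\le 12$. By concavity the integer maximum is at $d=1$ for $3\le k\le 6$ and at $d=2$ for $7\le k\le 12$. One must check that $d=2<k$ is admissible, which holds since $k\ge 3$. This yields $\sigma_k\le\frac{k-2}{2}$ and $\sigma_k\le\frac{2(k-3)}{3}$ respectively.

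\textbf{Step 2: applying Theorem~\ref{thm:fraction}.} Set $\sigma'$ to the respective bound. The condition $1+\sigma'\ge k\beta$ reads as follows in each range:
\begin{itemize}
\item For $3\le k\le6$: $1+\frac{k-2}{2}=\frac{k}{2}$, so the condition is $\beta\le\frac12$.
\item For $7\le k\le 12$: $1+\frac{2k-6}{3}=\frac{2k-3}{3}$, so the condition is $\beta\le\frac{2k-3}{3k}$.
\item In general: the condition is $\beta\le\frac{1+(\sqrt{k}-1)^2}{k}$.
\end{itemize}
Each matches the stated hypothesis.

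\textbf{Step 3: simplifying $\frac{\beta(\sigma'(k+1)+1)}{\beta+\sigma'}$.}
\begin{itemize}
\item For $\sigma'=\frac{k-2}{2}$: multiply numerator and denominator by $2$ to get $\frac{\beta((k-2)(k+1)+2)}{2\beta+k-2}=\frac{\beta(k^2-k)}{2\beta+k-2}$.
\item For $\sigma'=\frac{2(k-3)}{3}$: multiply by $3$ to get $\frac{\beta(2(k-3)(k+1)+3)}{3\beta+2k-6}=\frac{\beta(2k^2-4k-3)}{3\beta+2k-6}$.
\item For $\sigma'=(\sqrt{k}-1)^2$: the numerator is $\beta\bigl((k-2\sqrt{k}+1)(k+1)+1\bigr)$. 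Expanding gives $k^2-2k\sqrt{k}+2k-2\sqrt{k}+2 = k^2-2(k\sqrt{k}-k+\sqrt{k}-1)$, as claimed.
\end{itemize}

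The only step needing care is the integer maximisation in Step 1. It relies on concavity of $g$ to reduce it to comparing consecutive values. The remainder is algebra.
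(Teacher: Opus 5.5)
Your proposal is correct and follows the paper's proof. You bound $\sigma_k=\sigma_{k,k}$ via Theorem~\ref{thm:delta} and Corollary~\ref{cor:sigma2}(a), locate the maximiser ($d=1$ resp.\ $d=2$ among integers, $d=\sqrt{k}-1$ over the reals), and substitute into Theorem~\ref{thm:fraction}, whose hypothesis $1+\sigma'\ge k\beta$ is exactly the stated condition on $\beta$. You also supply the details the paper leaves implicit (the concavity argument and the algebra), and maximising over all real $d>-1$ rather than over $[1,k-1]$ is slightly cleaner, since for $k=3$ the real maximiser $\sqrt{3}-1$ lies outside $[1,k-1]$.
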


\begin{proof}

  For $1\le k\le 12$ the results follow immediately from Theorem~\ref{thm:fraction} and Theorem~\ref{thm:delta} by explicitly computing a value of $d$ for which the formula in Theorem~\ref{thm:delta}
  has a maximum (that is: $d=1$ for $3\le k\le 6$ and  $d=2$ for $7\le k\le 12$ and inserting it in the formula of  Theorem~\ref{thm:fraction}.
  For general $k$ the result follows by computing the maximum in Theorem~\ref{thm:delta} on the real interval $[1,k-1]$ by basic analytical computations.

\end{proof}

As $\beta$ is often given in the form $\beta=\frac{a}{b}$ with small integer numbers $a,b$, it is useful to give the bounds for $\domfrac{G}$ and small fixed $k$ also in this notation:
For $k=3,4,5,6$ we have $\domfrac{G} \le \frac{6a}{2a+b}$, $\frac{6a}{a+b}$, $\frac{20a}{2a+3b}$, and $\frac{15a}{a+2b}$ respectively.

In \cite{indepdom_regular} it is (though not using our terminology) proven that $\sigma_k \le k-3$ for $k\ge 4$. This gives the same value for $k=4$, but a larger upper bound for $k\ge 5$, so also
the bounds we obtain for $\domfrac{G}$ would be the same -- for the fixed $\beta=\frac{k-1}{2k-1}$ used in \cite{indepdom_regular} -- if $k=4$ and sharper for $k\ge 5$. Nevertheless the main intention of the theorems in this section is to
be able to also use them for values of $\beta$ that are specific for some special classes of graph.

Corollary~\ref{cor:domfrac} can now be used to easily reproduce results on $\domfrac{G}$ for some classes of graph, but also to obtain new results:

\begin{corollary}\label{cor:harvest}
  Let $G=(V,E)$ be a graph with maximum degree $k$.

   \begin{description}

  \item[(1)] If $3\le k\le 6$, $G$ is regular, and $G\not=K_{k,k}$, then  $\domfrac{G} \le \frac{(k-1)^2}{2k-3}$.

  \item[(2)] If $7\le k\le 12$, $G$ is regular, and $G\not=K_{k,k}$, then $\domfrac{G} \le \frac{(k-1)(2k^2-4k-3)}{4k^2-11k+3}$.

  \item[(3)] If $k=3$, $G$ is regular, $G\not= C_5\square K_2$, and $G$ does not contain $K_{2,3}$ as a subgraph, then $\domfrac{G} \le \frac{9}{7}$\\
    This contains the class of 2-connected planar cubic graphs without the 5-prism $C_5\square K_2$.

   \item[(4)] If $k=3$, and $G$ is regular and without 4-cycles, then $\domfrac{G} \le \frac{5}{4}$.

  \item[(5)] If $k= 4$, and $G$ is not necessarily regular, then $\domfrac{G} \le 2$.
      
    \end{description}

  \end{corollary}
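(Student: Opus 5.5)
The plan is to read off every item of Corollary~\ref{cor:harvest} from Corollary~\ref{cor:domfrac}, or directly from Theorem~\ref{thm:fraction}, by inserting known upper bounds $\idom{G}\le\beta|V|$ from the literature. In each item I first check the side condition on $\beta$ and then simplify the resulting rational expression.

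For (1) and (2) I would use the bound $\idom{G}\le\frac{k-1}{2k-1}n$ for connected $k$-regular graphs $G\ne K_{k,k}$ from \cite{indepdom_regular}, i.e.\ $\beta=\frac{k-1}{2k-1}$. If $G$ is disconnected, I apply the bound to each component, since both $\gamma$ and $\gamma_i$ are additive over components. This $\beta$ is below $\frac12$, so for $3\le k\le 6$ the first case of Corollary~\ref{cor:domfrac} applies. In $\frac{\beta(k^2-k)}{2\beta+k-2}$ the denominator becomes $\frac{2k-2+(k-2)(2k-1)}{2k-1}=\frac{k(2k-3)}{2k-1}$, and the quotient simplifies to $\frac{(k-1)^2}{2k-3}$.

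For $7\le k\le 12$ the condition $\frac{k-1}{2k-1}\le\frac{2k-3}{3k}$ is equivalent to $k^2-5k+3\ge0$, which holds. The denominator $3\beta+2k-6$ becomes $\frac{4k^2-11k+3}{2k-1}$, which gives exactly the expression in (2).

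For (3) and (4) I use the $k=3$ form $\domfrac{G}\le\frac{6a}{2a+b}$ (valid because $\beta\le\frac12$), together with the known cubic bounds. These are $\idom{G}\le\frac38 n$ for cubic graphs other than $C_5\square K_2$ without a $K_{2,3}$ subgraph, and $\idom{G}\le\frac5{14}n$ for cubic graphs without $4$-cycles. They give $\frac{18}{14}=\frac97$ and $\frac{30}{24}=\frac54$ respectively.

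The remark about 2-connected planar cubic graphs needs a short separate argument: such graphs (other than the $5$-prism) contain no $K_{2,3}$. I would argue as follows. In a cubic graph, a $K_{2,3}$ subgraph saturates its two degree-$3$ vertices, and each of the three middle vertices has exactly one further neighbour. I would then inspect how these three outgoing edges can attach while keeping the graph 2-connected and planar, and show this is impossible. If this case analysis gets messy, I would fall back on citing the literature where that class statement is made. I expect this step, namely being precise about which known $\beta$ results apply and justifying the planar remark, to be the main obstacle. The arithmetic itself is routine.

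For (5), $G$ need not be regular, so $\beta$ may exceed $\frac12$ and the condition $1+\sigma'\ge k\beta$ can fail. Instead I use the unconditional part of Theorem~\ref{thm:fraction}, $\domfrac{G}\le1+\sigma'$, with $\sigma'=\sigma_4\le\frac{4-2}{2}=1$ from Corollary~\ref{cor:domfrac}. This gives $\domfrac{G}\le2$.
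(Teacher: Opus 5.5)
Your proposal is essentially the paper's proof: insert $\beta=\frac{k-1}{2k-1}$, $\frac38$ and $\frac{5}{14}$ into Corollary~\ref{cor:domfrac} after checking $1+\sigma'\ge k\beta$, and use the unconditional bound $1+\sigma'$ with $\sigma'=1$ for (5). Your simplifications are correct. Your planned argument for the planar remark in (3) also works and supplies something the paper omits: every face of an embedded $K_{2,3}$ is a $4$-cycle through only two of the three middle vertices, so no component of the rest of the graph can receive all three outgoing edges, and then one of those edges is a bridge.

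One caveat concerns your componentwise reduction. It fails when some component is $K_{k,k}$, or $C_5\square K_2$ in (3); for example, $2K_{3,3}$ has ratio $\frac32>\frac43$. So the statement has to be read for connected $G$, or with no component equal to the excluded graph, which is what the paper tacitly assumes as well.
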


\begin{proof}

  The results follow by inserting values of $\beta$ into the formulas in Corollary~\ref{cor:domfrac}. For the $\sigma'$ used in Corollary~\ref{cor:domfrac}, the prerequisite  $1+\sigma' \ge k\beta$ is fulfilled in each case, except in case (5) where $1+\sigma'$ is used as a bound.

  (1) and (2) follow from the bound $\beta\le \frac{k-1}{2k-1}$ proven in \cite{indepdom_regular}. The values for $k=3,4$ were already proven in
  \cite{dom-idom-cubic}, resp.  \cite{indepdom_regular}. The other bounds improve earlier results.

  (3) follows from the bound $\beta=\frac{3}{8}$, that is proven in \cite{38nok23}.  Assuming the result
  in \cite{38forcubicatleast12}, that for $|V|>10$ the bound $\beta=\frac{3}{8}$ is valid also without the requirement of not containing a $K_{2,3}$ subgraph
  (but not containing $K_{3,3}$), we would get the bound in a more general context.  As at the moment this is written, the paper is still a preprint, we mention
  the result just here and not in the statement of the corollary.

  (4) follows by applying the bound $\beta=\frac{5}{14}$ proven in \cite{indepdom_no4}. This bound for  $\domfrac{G}$ is also given in that paper by a proof focused on this special case and mentioned here only for completeness.

  (5): The bound $\beta=\frac{5}{9}$ was proven in \cite{indepdom_regular}, but it turns out we get the best result by simply using $1+\sigma'$ as bound.
  
\end{proof}

\begin{figure}
\begin{center}
\includegraphics[width=55mm]{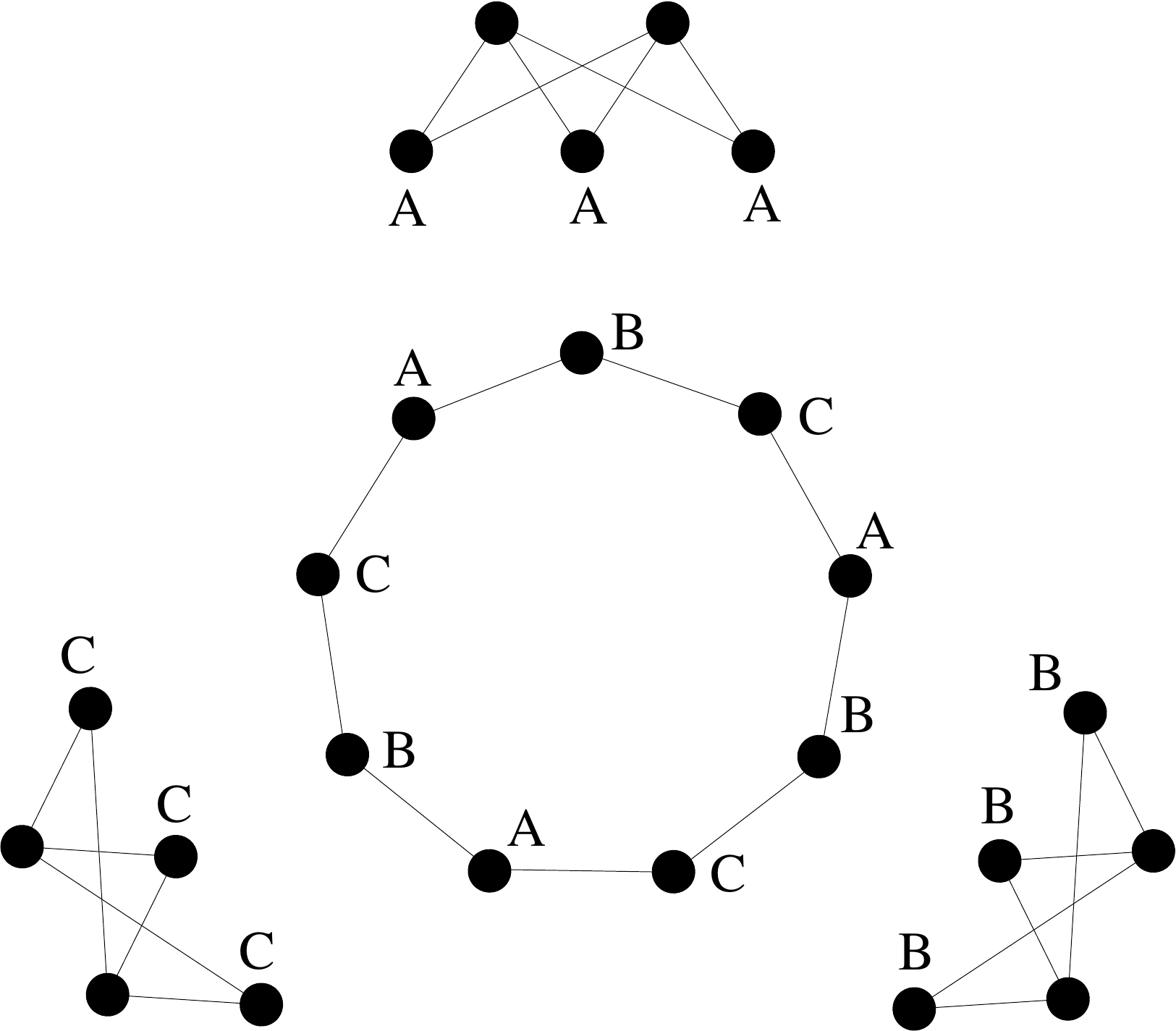}
\end{center}
\caption{A cubic, 3-connected graph $G$ on 24 vertices with $\idom{G}=9$ and $\dom{G}=7$. The labeled vertices in the copies of $K_{2,3}$ must each be connected to a different vertex in the cycle $C_9$ with the same label. This graph also shows that the answer to Question~1.1 in \cite{o_west} is negative.}\label{fig:cubic3con9_7}
\end{figure}

\section{Lower bounds for $\domfracC{\cal C}$ for some classes ${\cal C}$}

A lower bound $c$ for $\domfracC{\cal C}$ can be obtained by giving an example graph $G\in {\cal C}$ with $\domfrac{G}\ge c$. As we have already seen in the
last section, $K_{k,k}$, but sometimes also other specific graphs -- like $C_5\square K_2$ -- are often excluded from the class, as small specific graphs can be singular
exceptions not reflecting the overall rules of a class. So we have two kinds of examples: on one hand small, specific graphs, that leave the possibility that for
sufficiently large graphs better bounds can be proven -- and on the other hand examples of infinite sequences $G_1,G_2,\dots $ of graphs with the same or arbitrarily close ratios
$\domfrac{G_i}$ showing that the bounds are valid independent of a finite number of exceptional graphs. We write that a sequence gives bounds for ${\cal C}_\infty$, if it gives bounds for each set ${\cal C}^{\ge n}=\{G\in {\cal C}| |V(G)|\ge n\}$
 with $n\ge 0$, or formally

\begin{definition}
  For a class $\cal C$ of graphs define

  $\domfracCinf{\cal C}= \inf \{ \domfracC{{\cal C}^{\ge n}} | n\ge 1\}$
    
\end{definition}

Graphs presented in this part were found by structure enumeration programs like {\em snarkhunter} \cite{tricycle},{\em   genreg} \cite{MM98}, {\em   minibaum} \cite{minibaum}, or {\em   plantri} \cite{plantri}
 and can be found in the database {\em House of Graphs} \cite{HoG2} by searching for the term {\em idomfrac}. We will also not give -- sometimes lengthy -- arguments why some graphs have certain domination numbers or independent domination numbers. Also for this, computer programs were used.
 They are ancillary files to this article, but the programs for the domination number and independent domination number can also be used for single graphs by uploading them to the {\em House of Graphs} \cite{HoG2}, which uses these programs.

\subsection*{Specific graphs}

We will start by giving some new special graphs proving lower bounds for several classes. The first graph is the cubic, 3-connected graph $G_{9,7}$ on 24 vertices with $\idom{G_{9,7}}=9$ and $\dom{G_{9,7}}=7$
    described in Figure~\ref{fig:cubic3con9_7}, so $\domfrac{G}=\frac{9}{7}$ and $\domfracC{{\cal C}_{3}^3}\ge \frac{9}{7}$ for
    ${{\cal C}_3^3}$ the class of cubic, 3-connected graphs on at least $12$ vertices. When studying domination, the class of cubic graphs is often restricted to at
    least $12$ vertices to exclude $K_{3,3}$ and $C_5\square K_2$.  It is open whether also excluding $G_{9,7}$ would allow a better lower bound. $G_{9,7}$ shows a very special structure and we tried various methods to generalize this structure
    to larger graphs $G$ with an at least similarly large ratio of $\domfrac{G}$, but did not succeed.

In Figure~\ref{fig:quartic_5_3} a 4-regular, 4-connected graph $G$ on 13 vertices with  $\idom{G}=5$ and  $\dom{G}=3$ is given. So for this graph
$\domfrac{G}=\frac{5}{3}$ and therefore $\domfracC{{\cal C}_4^4}\ge \frac{5}{3}$ for ${{\cal C}_4^4}$ the class of
4-regular, 4-connected graphs. Complete enumerations show that up to $21$ vertices this is -- except $K_{4,4}$ -- the graph with the largest ratio of $\domfrac{G}$ and unique with this ratio.

\begin{figure}
\begin{center}
\includegraphics[width=35mm]{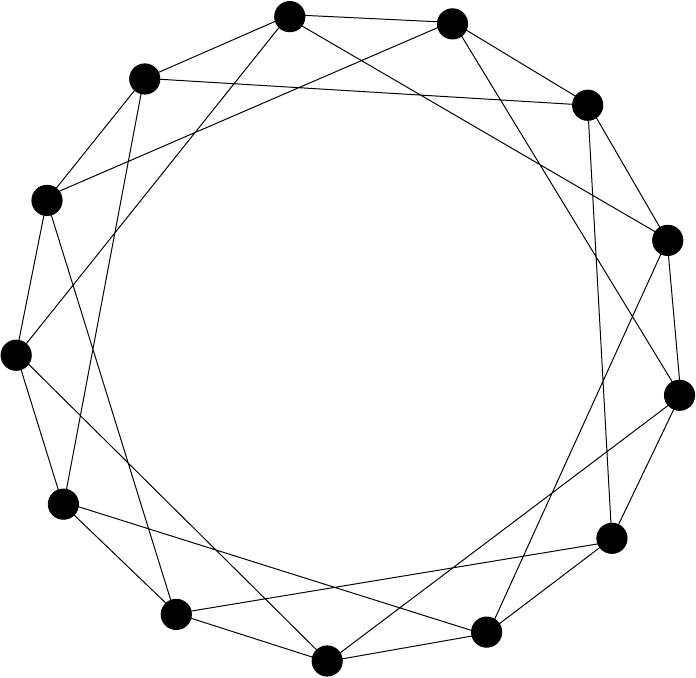}
\end{center}
\caption{A 4-regular, 4-connected graph $G$ on 13 vertices -- the Circulant graph $C_{13}(1,3)$ -- with  $\idom{G}=5$ and $\dom{G}=3$. It
  has genus 1 and the embedding on the torus is unique and a quadrangulation of the torus.}\label{fig:quartic_5_3}
\end{figure}

A 4-regular, 4-connected polyhedron $G$ on 28 vertices with  $\idom{G}=8$ and  $\dom{G}=6$ is given in Figure~\ref{fig:4regpolyhedron}. So for this graph
$\domfrac{G}=\frac{4}{3}$ and therefore $\domfracC{{\cal C}_4^p}\ge \frac{4}{3}$ for ${{\cal C}_4^p}$ the class of
4-regular, 4-connected polyhedra. 

\begin{figure}
\begin{center}
      \resizebox{0.3\textwidth}{!}{
        \input{planar_4reg_4con_frac_4_3.tikz}
        }
\end{center}
\caption{A 4-regular, 4-connected polyhedron $G$ on 28 vertices with  $\idom{G}=8$ and  $\dom{G}=6$.}\label{fig:4regpolyhedron}
\end{figure}

\subsection*{Infinite sequences of graphs}

In this, as well as in previous articles like \cite{indepdom_pl_cubic} or \cite{o_west}, infinite sequences of graphs are constructed by taking small building
blocks and connecting them to form arbitrarily large cycles or paths of these blocks. In some cases it is easy to see what the independent domination number of
the larger graph is (e.g.\ in \cite{o_west}), but in others (e.g.\ \cite{indepdom_pl_cubic}) it is hard to prove. As we will give several sequences based on
this cyclic construction -- some with a relatively large building block -- a case by case analysis by hand was no option and we had to develop an algorithm to
determine the independent domination number of these cycles of graphs.

First we will describe explicitly what these {\em cycles of graphs} are:

\begin{definition}

  A {\em cycle extension pair } $(G,D)$ is a graph $G=(V,E)$ together with a set $D=\{(v_1,w_1),\dots ,(v_k,w_k)\}$ of ordered 2-tuples of vertices of $G$ with $\{v_1,\dots ,v_k\}\cap \{w_1,\dots ,w_k\}=\emptyset$.


  For a cycle extension pair $(G,D)$, we define $G^p=(V^p,E^p)$ (with respect to $D$)  to be the graph with $V^p=\{0,\dots,p-1\} \times V$ and 
\[
\{(x,v),(y,w)\}\in E^p  \Leftrightarrow
\begin{cases}
	\{v,w\} \in E & \text{if $x=y$} \\
	 (v,w)\in D & \text{if } y\equiv x+1 \pmod p
\end{cases}
\]

We will denote the subgraph induced by vertices of the form $(i,v)$ as $G_i$ and interpret indices always modulo $p$.

Thus, $G^p$ consists of $p$ copies $G_0,\dots ,G_{p-1}$ of $G$ arranged in a cycle, where adjacent copies are always connected in the same way.

\end{definition}

We will only discuss cases where $\{v_1,w_1\},\dots ,\{v_k,w_k\}\not\in E$.

It is easy to see that $\dom{G^p}\le p\cdot \dom{G^1}$ and $\idom{G^p}\le p\cdot \idom{G^1}$, but we need the exact values -- especially for $\idom{G^p}$ -- to get
infinite sequences that give bounds for ${\cal C}_\infty$.

\begin{figure}
\begin{center}
\includegraphics[width=75mm]{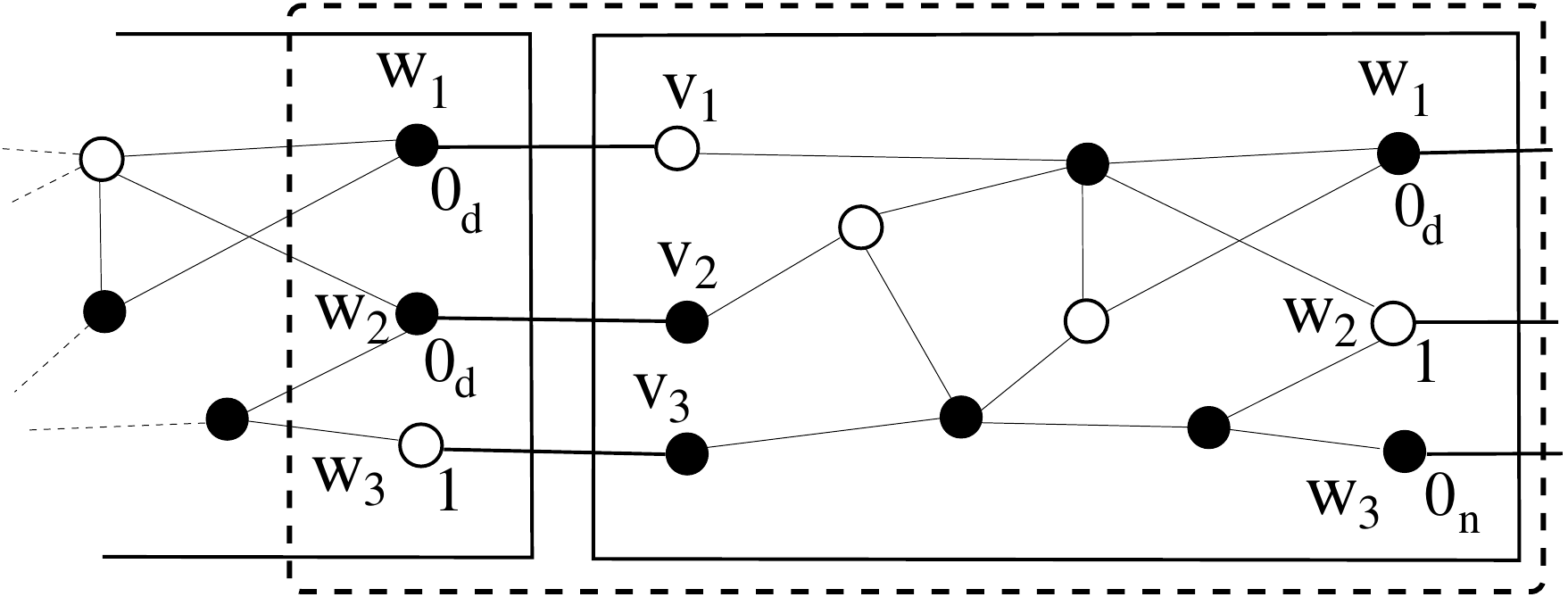}
\end{center}
\caption{A building block for a cycle of graphs. The hollow vertices denote vertices in $S$. This part of $G^p$ transforms an input triple $(0_d,0_d,1)$ to an output triple $(0_d,1,0_n)$ with cost $4$. $\bar G_i$ is given by the dashed lines.}\label{fig:blok}
\end{figure}

We will describe the algorithm only for the computation of $\idom{G^p}$ -- the algorithm for $\dom{G^p}$ is completely analogous. Given a graph $G^p$ with an independent
dominating set $S$ and a copy $G_i$ of $G$. Let $w'_1=(i,w_1),\dots ,w'_q=(i,w_q)$ be all pairwise different vertices occurring with $w_{j}$ one of the $w_i$ in $D$. 

For given $S$, each vertex $w\in \{w'_1,\dots ,w'_q\}$ can be characterized by exactly one of the following states $s(w)$:

\begin{enumerate}
	\item $w \in S$ (indicated as "$s(w)=1$")
	\item $w\notin S$, but dominated from within its own block $G_i$ (indicated as "$s(w)=0_d$")
	\item $w\notin S$ and not dominated from within $G_i$ (indicated as "$s(w)=0_n$")
\end{enumerate}

For a given building block $G_{i-1}$ in $G^p$, the function $s()$ defines a tuple $(s(w'_1),\dots ,s(w'_q))$ from $\{1,0_d,0_n\}^q$ -- this is the {\em
  output tuple} of $G_{i-1}$ and at the same time the {\em input tuple} for the next block $G_{i}$. So $\bar G_i$, which is $G_i$ together with the vertices of $G_{i-1}$
corresponding to some $w\in \{w'_1,\dots ,w'_q\}$ describes a transition from the input tuple to the output tuple with a certain cost $c_i$ which is given by the number of 
vertices of $S$ in $G_i$. The vertices $v\in S$ in $\bar G_i$ but not in $G_i$ are not counted for $G_i$ as they are counted for $G_{i-1}$.
So $\idom{G^p}=\sum_{i=0}^{p-1}c_i$ and for $S$ a minimum independent dominating set, each $c_i$ is minimal for the given input and output tuple.

This process can also be reversed: having a cyclic sequence of length $p$ with compatible input and output tuples, so that the sum of the transition costs is minimal, one can construct a minimum independent dominating set.

The algorithm now works as follows:

\begin{description}
\item[(a)] for all $x,y \in \{1,0_d,0_n\}^q$ compute the minimum transition cost from $x$ to $y$ -- or set it to $\infty$ if such a transition is not possible.
\end{description}

Then define a weighted directed graph $H_{\gamma_i}$ (resp.\ $H_{\gamma}$) (possibly with loops) with vertex set $V=\{(x_1,\dots ,x_q) | x_i \in
\{1,0_d,0_n\}\}$. Note that the size of this set is $3^q$ and independent of $p$ or $|G|$.  The edges are the edges between vertices for which the transition
cost is finite and the weight $w(e)$ is the transition cost. For given $p$, $\idom{G^p}$ is given by the smallest weight of a closed directed walk of length $p$
in $H$ -- that is the sum of the edge weights: every independent dominating set $S$ can be translated into a walk (with the weights at most the number of
elements of $S$ in the corresponding $G_i$) and the other way around.

\begin{definition}
  Let $H$ be a weighted directed graph.

  Then define $W(H)= \min\{ \frac{w(C)}{p} | \text{ } C\text{ is a directed cycle in } H \text{ with length } p \text{ and weight } w(C)\}$.

\end{definition}

We used an implementation of an algorithm of Karp \cite{karp_minmeanweightcycle} to efficiently compute $W(H)$.

\begin{lemma}
  Let $(G,D)$ be a cycle extension pair, $p>1$ a natural number, and $H_{\gamma_i}$ and $H_{\gamma}$ the corresponding directed graphs.

  Then

  \begin{description}
  \item[(i):] $\idom{G^p}\ge p\cdot W(H_{\gamma_i})$ and  $\dom{G^p}\ge p\cdot W(H_{\gamma})$.
  \item[(ii):] If $q$ is the length of a cycle realizing $W(H_{\gamma_i})$, then $\idom{G^p} = p\cdot W(H_{\gamma_i})$ for all $p=q\cdot j$ with $j\ge 1$ and analogously for $q', W(H_{\gamma})$, and $\dom{G^p}$.
    \item[(iii):] There are infinitely many $r$, so that $\domfrac{G^r}\ge \frac{W(H_{\gamma_i})}{W(H_{\gamma})}$ and even infinitely many $r$ so that $\domfrac{G^r}= \frac{W(H_{\gamma_i})}{W(H_{\gamma})}$.
    \end{description}
  \end{lemma}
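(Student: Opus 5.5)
We argue via the correspondence between independent dominating sets of $G^p$ and closed directed walks of length $p$ in $H_{\gamma_i}$ described before the lemma; the case of $\dom{G^p}$ and $H_\gamma$ is identical. Since $H_{\gamma_i}$ has only $3^q$ vertices, $W(H_{\gamma_i})$ is a minimum over finitely many directed cycles, so it is attained.

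For (i), take a minimum independent dominating set $S$ of $G^p$. Reading off the output tuples of $G_0,\dots,G_{p-1}$ gives a closed walk of length $p$ in $H_{\gamma_i}$ of weight at most $|S|=\idom{G^p}$. A closed directed walk decomposes into directed cycles $C_1,\dots,C_m$ whose lengths add up to $p$ and whose weights add up to the weight of the walk. Repeatedly split at the first repeated vertex; loops count as cycles of length $1$. Each $C_j$ satisfies $w(C_j)\ge W(H_{\gamma_i})\,|C_j|$, and summing gives $\idom{G^p}\ge p\,W(H_{\gamma_i})$.

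For (ii), let $C$ be a cycle realizing $W(H_{\gamma_i})$, with length $q$. (Here $q$ is the cycle length, not the tuple length.) If $p=qj$, traversing $C$ $j$ times gives a closed walk of length $p$ and weight $p\,W(H_{\gamma_i})$. We translate it back into a set $S$ of $G^p$ by choosing, in each $G_i$, a minimum-cost realization of the prescribed transition.

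The main obstacle is checking that this $S$ really is an independent dominating set. The point is that the states $1,0_d,0_n$ record exactly the information needed at the interface. Independence across the edges between $G_{i-1}$ and $G_i$ holds because membership of the $w$-vertices is fixed by the input tuple and respected by the transition in $\bar G_i$. A vertex in state $0_n$ must be dominated from the next block, and this is enforced in the transition. If the definition of transitions is slightly loose, for instance about which vertices may be dominated from the previous block, I would refine it so the correspondence is a bijection on feasibility, and the argument then goes through. Combining this with (i) gives equality.

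For (iii), let $q$ and $q'$ be the cycle lengths realizing $W(H_{\gamma_i})$ and $W(H_\gamma)$. Take $r=\mathrm{lcm}(q,q')\cdot j$ for $j\ge 1$. Part (ii) applied to both graphs gives $\idom{G^r}=r\,W(H_{\gamma_i})$ and $\dom{G^r}=r\,W(H_\gamma)$. Hence the ratio equals $W(H_{\gamma_i})/W(H_\gamma)$ for infinitely many $r$, which in particular gives the inequality.

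For the division to make sense we need $W(H_\gamma)>0$. This holds because every block of a dominating set of $G^r$ must contain at least one vertex, or else we argue directly that $\dom{G^r}\ge 1$. It also suffices that $\dom{G^r}>0$, which is trivially true.
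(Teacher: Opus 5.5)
Your proof is correct and takes the same approach as the paper. In (i) you split the closed walk of a minimum set into directed cycles, doing this directly on the walk where the paper phrases it as an induction that cuts out the copies of $G$ belonging to a shortest closed subwalk. In (ii) you traverse an optimal cycle $j$ times and use the reversibility of the state correspondence, and in (iii) you take common multiples of $q$ and $q'$.

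One small slip: if every vertex of $G$ is one of the $v_j$ or $w_j$, a block may contain no vertex of a dominating set, so your first justification of $W(H_\gamma)>0$ fails. Your second one, $r\,W(H_\gamma)=\dom{G^r}\ge 1$, is valid and suffices.
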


\begin{proof}
  {\bf (i):} Let $S$ be a minimum independent dominating set for $G^p$. Then $S$ defines a cyclic walk $C$ of length $m$ in $H_{\gamma_i}$. We prove the result by induction on the length of $C$.
  If $C$ is a cycle, the statement follows directly from the definition. Otherwise let $C_0$ be a shortest closed subwalk -- that is: a cycle. If $C_0$ has length $m_0$, then removing the copies
  of $G$ corresponding to the edges of $C_0$ (and connecting the boundary copies) we have a graph $G^{p-m_0}$ to which we can apply induction, so

  $\idom{G^{p-m_0}}\ge (p-m_0)\cdot W(H_{\gamma_i})$ and as for $C_0$ we have $\frac{w(C_0)}{m_0}\cdot m_0 \ge W(H_{\gamma_i})\cdot m_0$ we get the result.

  {\bf (ii):} As multiples of $q$ allow independent dominating sets with minimum average cost per copy of $G$, this follows immediately. The same holds for $q'$ and dominating sets.

  {\bf (iii):} The first part follows immediately with (i) and (ii) for multiples of the number $q'$ in (ii). The second part follows for common multiples of $q$ and $q'$.

\end{proof}

\begin{lemma}\label{lem:cycleext}

  Let $(G,D)$ be a cycle extension pair with $D=\{(v_1,w_1),\dots ,(v_r,w_r)\}$.
  If $G^1$ and $G^2$ are $k$-connected and
  \begin{description}
  \item[(a):] there are $r$ vertex disjoint paths in $G$ connecting $v_j$ with $w_j$ for $1\le j \le r$, or
  \item[(b):] for each subset of $\{v_1,\dots ,v_r\}$ and each subset of the same cardinality $c$ of $\{w_1,\dots ,w_r\}$, there are $c$ vertex disjoint paths between the sets,
    \end{description}
then $G^p$ is $k$-connected for all $p\ge 1$.

\end{lemma}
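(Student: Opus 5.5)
We argue via Menger's theorem: it suffices to show that for $p\ge 3$ (the cases $p=1,2$ are hypotheses) removing any set $X$ of at most $k-1$ vertices from $G^p$ leaves a connected graph. The plan is to exploit that $G^p$ is locally a copy of $G^2$ or $G^1$. For a segment of consecutive copies, the graph induced by $G_i\cup G_{i+1}$ in $G^p$ is exactly the graph $G^2$ with the two ``wrap-around'' connection sets from $G_{i+1}$ back to $G_i$ deleted. So we cannot simply cite $2$-block subgraphs. Instead we proceed in two steps.

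\textbf{Step 1 (each copy stays attached).} Fix $X$ with $|X|\le k-1$ and a vertex $x\in V^p\setminus X$, say in $G_i$. We want a path from $x$ to every other surviving vertex. First show that every vertex of $G_i\setminus X$ can reach both $G_{i-1}$ and $G_{i+1}$ (or the set $\{(i,w_j)\}$ resp.\ $\{(i,v_j)\}$ with surviving edges) avoiding $X$. Here we use $k$-connectivity of $G^1$: $G^1$ is $G$ plus edges $\{v_j,w_j\}$, and contracting/“unfolding” an $X$-avoiding path in $G^1$ between a vertex of $G$ and the rest gives a path in $G^p$ that either stays in $G_i$ or leaves it through a connecting edge. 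More precisely, $G^1$ is the quotient of $G^p$ under $(x,v)\mapsto v$, and the projection of $X$ has at most $k-1$ vertices, so any two vertices of $G^1$ are joined by a path avoiding the projection; lifting this path to $G^p$ (moving to the next/previous copy at each connecting edge) yields a path avoiding $X$ whose endpoint lies in some copy. This lifting shows: any vertex of $G^p\setminus X$ reaches some lift of any target vertex of $G$.

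\textbf{Step 2 (moving between copies).} It remains to connect different lifts $(a,u)$ and $(b,u)$ of the same vertex, i.e.\ to ``shift'' along the cycle. Here hypothesis (a) or (b) enters. Under (a), the $r$ disjoint paths $P_j$ from $v_j$ to $w_j$ in $G$, together with the edges $(i,v_j)(i+1,w_j)$, form in $G^p$ $r$ disjoint ``helical'' routes running around the whole cycle; similarly one uses $k$-connectivity of $G^2$ to argue that removing $X$ cannot separate consecutive copies: every vertex-cut between $G_i$ and $G_{i+1}$ inside two consecutive copies lifts to a cut of $G^2$, hence has size $\ge k$ unless it is bypassed. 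Under (b) one applies it to the subsets of connecting vertices surviving $X$ to route flow through a copy $G_i$ whenever $X\cap G_i$ blocks fewer than all transit vertices. The key counting: since $|X|\le k-1$ and $G^2$ is $k$-connected, in $G^2$ the copies are joined by $\ge k$ disjoint paths; for $p\ge3$, $X$ can hit heavily only a few copies, and the disjoint $v_j$–$w_j$ transit paths guarantee that each lightly hit copy passes connections from its left side to its right side, so going around the cycle in one of the two directions avoids the heavily hit copies.

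\textbf{Main obstacle.} Step 2 is the hard part: making precise why a set $X$ concentrated in one or two copies cannot disconnect the cycle, when going ``around the other way'' requires transit through copies, which is exactly where (a)/(b) are needed. I would first try the cleanest formulation: show via Menger in $G^2$ that between $\{(0,\cdot)\}$-side and $\{(1,\cdot)\}$-side there are $k$ disjoint paths, then for $p$ copies replace each internal copy by the $r$ (resp.\ matched $c$) disjoint transit paths from (a)/(b), concatenating into $\min(k,\cdot)$ disjoint routes, and handle the case that $X$ splits between two far-apart copies by going along both arcs of the cycle.
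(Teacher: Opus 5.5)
Your proposal has a genuine gap. The reduction to vertex cuts and the lifting in Step 1 are usable, but Step 2 is where the hypotheses on $G^2$ and (a)/(b) enter, and it is a list of strategies rather than an argument. ``Heavily hit'' copies, ``going around in one of the two directions'' and ``$\min(k,\cdot)$ disjoint routes'' are never made precise.

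The one concrete claim in Step 2 is also false. A vertex cut between $G_i$ and $G_{i+1}$ inside $G_i\cup G_{i+1}$ does not lift to a cut of $G^2$, because $G^2$ additionally contains the $r$ backward edges $(1,v_j)(0,w_j)$. Inside $G_i\cup G_{i+1}$ the $r$ vertices $(i,v_j)$ already separate, and nothing prevents $r<k$.

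You are missing two things. First, the quantitative consequence of $k$-connectivity of $G^2$, namely $k\le 2r$. The at most $2r$ attachment vertices of one copy separate $G^2$; if they make up all of $V(G)$, then $|V(G^1)|\le 2r$ and so $k<2r$. Second, a way to use both directions around the cycle at once. The paper does this by a direct Menger construction:
\begin{itemize}
\item for $u,v$ in one copy, it takes $k$ internally disjoint paths in $G^1$ and replaces each D-edge by a route once around all other copies along the transit paths;
\item for $u,v$ in different copies, it takes $k$ paths from $G^2$. Forward crossings $(a,v_j)(a+1,w_j)$ are routed along one arc of the cycle and backward crossings $(a,w_j)(a-1,v_j)$ along the other. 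The two families use disjoint sets of copies, so up to $2r$ routes coexist.
\end{itemize}

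Alternatively, once you have $k\le 2r$, your own Step 1 finishes the proof under (a) by pigeonhole, with no Step 2 at all. Copy the paths $P_j$ of (a) into every $G_i$ and join them by the D-edges. This gives $r$ pairwise disjoint cycles $Z_j$ in $G^p$ with $V(Z_j)=\pi^{-1}(V(P_j))$.

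Since $|X|\le k-1\le 2r-1$, some $Z_j$ meets $X$ at most once, so $Z_j-X$ is connected. Moreover, $\pi(X)\cap V(P_j)=\pi(X\cap V(Z_j))$ has at most one element while $|V(P_j)|\ge 2$. Hence there is $z\in V(P_j)\setminus\pi(X)$.

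Now take any $y\notin X$. The graph $G^1-(\pi(X)\setminus\{\pi(y)\})$ is connected, since at most $k-1$ vertices are deleted; pick a path in it from $\pi(y)$ to $z$ and lift it starting at $y$. Every lifted vertex after $y$ projects outside $\pi(X)$, so the lift avoids $X$ and ends on $Z_j-X$. Thus $G^p-X$ is connected.

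This also repairs Step 1 as you stated it. Deleting all of $\pi(X)$ fails when another lift of $\pi(y)$ lies in $X$. And for a target in $\pi(X)$ the lifted path may end in $X$, so ``reaches some lift of any target vertex'' is not true in general.
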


\begin{proof}
  To simplify notation, we will not formally denote the vertices as pairs $(x,v)$ with $x$ denoting the copy of $G$, but just talk about the copy of a vertex.
  For $p\in \{1,2\}$ the statement is part of the definition, so assume $p\ge 3$.

  Let $u,v$ be two vertices in $G^p$. If they are in the same copy of $G$, then there are $k$ (internally) vertex-disjoint paths between $u$ and $v$ in $G^1$. If these paths
  contain edges coming from $D$, say $\{v_{i_1},w_{i_1}\},\dots ,\{v_{i_{r'}},w_{i_{r'}}\}$, then in both cases these edges can be replaced by the vertex disjoint paths connecting
  the $w_{i_j}$ and $v_{i_j}$ through the copies together with the edges $\{v_{i_j},w_{i_j}\}$ between the copies.

  If $u$ and $v$ are in different copies -- w.l.o.g\ $u$ in copy $1$ and $v$ in copy $c$ -- the fact that $G^2$ is $k$-connected implies that there are $k$ vertices $x_1,\dots ,x_k \in \{v_1,\dots ,v_r,w_1,\dots ,w_r\}$,
  so that there are $k$ vertex disjoint paths from $u$ to $x_1,\dots ,x_k$ and $k$ vertex disjoint paths from $v$ to $x'_1,\dots ,x'_k$ with $x'_i$ the other endpoint of the edge containing $x_i$. But then the edges
  $\{x_i,x'_i\}$ used in $G^2$ can be replaced by paths possibly using several copies of $\{x_i,x'_i\}$ and the paths in the copies of $G$ connecting the sets $\{x_1,\dots ,x_k\}\cap \{\{v_1,\dots ,v_r\}$
  to $\{x'_1,\dots ,x'_k\}\cap \{\{w_1,\dots ,w_r\}$ and the ones connecting $\{x_1,\dots ,x_k\}\cap \{\{w_1,\dots ,w_r\}$
  to $\{x'_1,\dots ,x'_k\}\cap \{\{v_1,\dots ,v_r\}$.

  \end{proof}

For the explicit examples of cycle extension pairs, the fact that $G^1$ and $G^2$ have the connectivity claimed is proven by computer. The fact that the required paths exist can easily be checked by hand. In most cases part (a) is obvious, in case part (b) is necessary, this is explicitly mentioned.

We will now give graphs that prove lower bounds for ${\cal C}_\infty$ for some classes ${\cal C}$. These graphs were found by computer, by first searching for graphs with a large ratio $\domfrac{G}$ and then testing whether
some of the edges can be removed and transformed to directed edges used in $D$. This is a heuristic approach, which in facts focuses on graphs $G$ for which $G^1$ has a large domination ratio. There may be other interesting
cases\dots

\begin{figure}[h]
  \begin{center}
      \includegraphics[width=68mm]{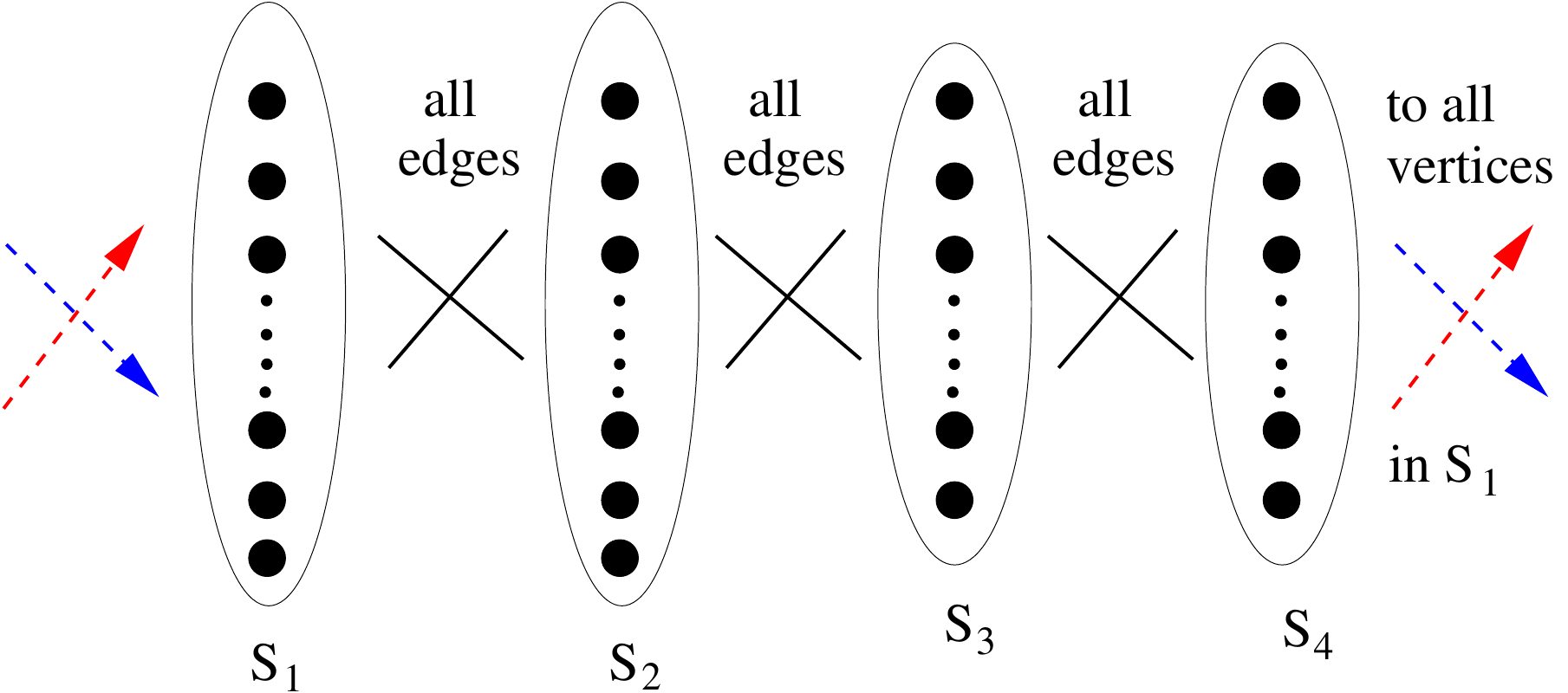}
\end{center}
\caption{A cycle extension pair $(G,D)$ for arbitrary $k\ge 3$. $G$ has $2k$ vertices distributed over four independent sets $S_1,\dots ,S_4$. $S_1$ and $S_2$
  have $\lceil \frac{k}{2}\rceil$ vertices and $S_3$, while $S_4$ have $\lfloor \frac{k}{2}\rfloor$ vertices. In $G$, all edges between $S_1,S_2$, $S_2,S_3$,
  and $S_3,S_4$ are present and $D$ is the set of all $(v_1,v_2)$ with $v_1\in S_4$, $v_2\in S_1$. Each $G^p$ is a k-regular, bipartite graph. For even $k$ (or
  $p=1$), $G$ is k-connected. For odd $k$ and $p>1$, $G$ is only $(k-1)$-connected. }\label{fig:kconstr}
\end{figure}

\begin{lemma}

  For each $k\ge 3$ and ${\cal C}$ the class of $k$-regular, $(2\lfloor \frac{k}{2}\rfloor)$-connected graphs 
  -- or ${\cal C}$ the same class with the additional restriction of being bipartite  --
  we have $\frac{k}{3} \le \domfracCinf{{\cal C}} < \frac{k}{2}$.

\end{lemma}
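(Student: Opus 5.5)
Both inequalities are proved separately: the lower bound comes from the cycle extension pair of Figure~\ref{fig:kconstr}, the upper bound from the results of Section~2.

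\textbf{Lower bound.} Take $(G,D)$ from Figure~\ref{fig:kconstr}. In $G^p$ every vertex of $S_1$ is adjacent to all of $S_2$ and to all of $S_4$ of the previous copy. Every vertex of $S_2$ sees $S_1\cup S_3$ and every vertex of $S_3$ sees $S_2\cup S_4$, so each degree is $\lceil k/2\rceil+\lfloor k/2\rfloor=k$. The graph is bipartite with sides $S_1\cup S_3$ and $S_2\cup S_4$, since $D$ joins $S_4$ to $S_1$. For the connectivity $2\lfloor k/2\rfloor$ I will invoke Lemma~\ref{lem:cycleext}(a): $G^1,G^2$ have the stated connectivity (a finite check, done by hand from the complete bipartite structure), and each $v_j\in S_4$ can be joined to a distinct $w_j\in S_1$ by disjoint paths $S_4\to S_3\to S_2\to S_1$ via a matching.

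Domination: in each copy, one vertex of $S_2$ together with one vertex of $S_3$ dominates the whole copy, so $\dom{G^p}\le 2p$. A lower bound of order $p$ also follows: each copy with its neighbours meets a bounded vertex set, so at least roughly $p/3$ dominators are needed overall. The real point is $\idom{G^p}\ge \frac{k}{3}\dom{G^p}$ asymptotically. In an independent dominating set $S$, look at each copy: if $S$ contains a vertex of $S_2$, it cannot contain any of $S_1\cup S_3$, so $S_4$ of that copy and $S_1$ of the next copy must be handled with structure that forces whole sides. Using $\lceil k/2\rceil$ or $\lfloor k/2\rfloor$ vertices per side, I expect a periodic optimum of about $k$ vertices per $3$ copies, for instance all of one independent class every third copy. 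With $\dom{G^p}$ about $2p/3$ for this chain-like structure, this gives ratio $\approx k/3$.

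The exact per-copy averages are exactly what the transfer-matrix lemma computes, so I would argue this rigorously through $W(H_{\gamma_i})$ and $W(H_\gamma)$ via a hand analysis of the states $\{1,0_d,0_n\}$ on $S_1$, using symmetry: only the counts matter, namely whether $S_1$ is partly in $S$ or is dominated. Part (iii) of the lemma then yields infinitely many $r$ with $\domfrac{G^r}\ge k/3$, hence the bound for ${\cal C}_\infty$. Pinning down $W(H_{\gamma_i})/W(H_\gamma)=k/3$ exactly (especially $\dom$ per copy, and the parity of $k$) is the main obstacle; I would first guess the optimal periodic patterns and then prove the matching lower bounds by a discharging argument over three consecutive copies.

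\textbf{Upper bound.} Use Theorem~\ref{thm:fraction} with $\sigma'=\sigma_k$. For the bipartite class, Corollary~\ref{cor:sigma2}(b) gives $\sigma=\frac{k-2}{2}$, so $\domfrac{G}\le 1+\sigma'=\frac{k}{2}$. Strictness for large graphs comes from the first formula. Regular graphs other than $K_{k,k}$ satisfy $\beta\le\frac{k-1}{2k-1}<\frac12$ (the cited bound from \cite{indepdom_regular}), so $1+\sigma'\ge k\beta$ holds, and $\frac{\beta(\sigma'k+\sigma'+1)}{\beta+\sigma'}$ is increasing in $\beta$ and equals $k/2$ at $\beta=\frac12$. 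Hence we get a bound strictly below $k/2$, uniformly in $G$, for graphs with more than $2k$ vertices.

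For the non-bipartite class, $\sigma_k$ can be larger, and I would instead apply Theorem~\ref{thm:delta} only if its bound still gives $<k/2$. Otherwise I would argue directly that a connected $k$-regular graph other than $K_{k,k}$ has $\domfrac{G}<k/2$ with a uniform gap, using that $\idom{G}\le\frac{k-1}{2k-1}n$ and $\dom{G}\ge\frac{n}{k+1}$. This direct bound only gives about $k\cdot\frac{k+1}{2k-1}\cdot\frac{1}{2}\cdot 2$, so I would check whether it falls below $k/2$ and fall back on Theorem~\ref{thm:fraction} with Brooks' bound if needed.
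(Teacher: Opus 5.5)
\textbf{Lower bound.} This half rests on the inequality $\idom{G^p}\ge\frac{2kp}{3}$ for $3\mid p$; the bound $\dom{G^p}\le 2p$ is easy. You leave exactly this inequality as ``the main obstacle''. Your heuristic values are also wrong, and they do not give $\frac k3$:
\begin{itemize}
\item $\dom{G^p}$ is $2p$, not about $\frac{2p}{3}$. For $k\ge 4$, the set $S_2\cup S_3$ of a copy has its closed neighbourhood inside that copy and needs two dominators.
\item Taking one class every third copy is not dominating, since a chosen class dominates only itself and its two adjacent classes.
\item $k$ vertices per three copies measured against $\frac{2p}{3}$ would give $\frac k2$, not $\frac k3$.
\end{itemize}
The missing idea, which you gesture at but never state, is an all-or-nothing property. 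Number the classes cyclically $S_0,\dots,S_{4p-1}$. Every vertex of $S_j$ has neighbourhood exactly $S_{j-1}\cup S_{j+1}$, so if an independent dominating set $I$ meets $S_j$, it misses $S_{j-1}\cup S_{j+1}$ and must therefore contain all of $S_j$. Hence $I$ is a union of classes whose indices form an independent dominating set of $C_{4p}$, and every three consecutive classes contain a chosen one.
\begin{itemize}
\item For even $k$ this gives $\idom{G^p}\ge\frac{4p}{3}\cdot\frac k2=\frac{2kp}{3}$ at once, with equality by choosing every third class.
\item For odd $k$ the class sizes alternate in pairs between $\lceil\frac k2\rceil$ and $\lfloor\frac k2\rfloor$. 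You still need a weighted count to reach $\frac{2kp}{3}$; the paper does this over blocks of twelve consecutive classes.
\end{itemize}
Without this reduction the transfer-matrix lemma is no help. The claim is for all $k\ge 3$ and the state space $\{1,0_d,0_n\}^{\lceil k/2\rceil}$ grows with $k$; the ``symmetry'' that collapses it is precisely this property. A minor point: Lemma~\ref{lem:cycleext}(a) does not apply literally, because $D$ is all of $S_4\times S_1$. Your paths through $S_3,S_2$ verify condition (b) instead, and checking $G^1,G^2$ for all $k$ is not a finite computation.

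\textbf{Upper bound.} Your argument for the bipartite class is correct and differs from the paper's. You use $\beta=\frac{k-1}{2k-1}$ from \cite{indepdom_regular} instead of the trivial $\beta=\frac12$, and Theorem~\ref{thm:fraction} with $\sigma'=\frac{k-2}{2}$ then gives the uniform bound $\frac{(k-1)^2}{2k-3}<\frac k2$. The paper gets only ``$\le$'' from its own tools and takes strictness from \cite{indepratiok2}. However, the strict bound is needed for the class without bipartiteness (it then passes to the subclass), and there you only offer untested alternatives. Your ``direct'' fallback fails for every $k\ge 3$, since $\frac{(k-1)n/(2k-1)}{n/(k+1)}=\frac{k^2-1}{2k-1}\ge\frac k2$. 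The route via Theorem~\ref{thm:fraction} does succeed, but you must carry it out:
\begin{itemize}
\item For $\beta=\frac{k-1}{2k-1}$ one checks that $\frac{\beta(\sigma'k+\sigma'+1)}{\beta+\sigma'}<\frac k2$ if and only if $\sigma'<k-1$.
\item If $1+\sigma'<k\beta$, the alternative bound $1+\sigma'<k\beta<\frac k2$ applies.
\item Theorem~\ref{thm:delta} with Corollary~\ref{cor:sigma2}(a) gives $\sigma_k\le\max_d\frac{d(k-1-d)}{d+1}<k-2$.
\end{itemize}
Together these yield a constant below $\frac k2$ depending only on $k$. Alternatively, cite \cite{indepratiok2}, as the paper does.
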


\begin{proof}

  We will use the graphs described in Figure~\ref{fig:kconstr}. For even $k$ they are known as special {\em $\frac{k}{2}$-expansions} of the cycle and studied e.g.\ in \cite{indepdomreg}.
    Let $G_i$ be one of the copies of $G$ in a graph
  $G^p$. Then the vertices in $S_2$ and $S_3$ can not be dominated from outside the copy and as there is no vertex dominating $S_2\cup S_3$, each dominating set
  contains at least two vertices of $G_i$. On the other hand one vertex from $S_2$ and one vertex from $S_3$ dominate the whole of $G_i$, so $\dom{G^p}=2p$.

  For $\idom{G^p}$ note that if a set $S_i$ of a copy of $G$ contains a vertex $v$ of an independent dominating set $I$, then $S_i\subset I$, as all vertices in
  the neighbouring sets are excluded by $v$, so that the other vertices in $S_i$ can only be dominated by themselves. We choose $p$ a multiple of $3$, so we can
  renumber the different sets $S_i$ in the copies of $G$ as $S_0,\dots ,S_{4p-1}$ and split them into $\frac{4p}{3}$ parts $(S_0,S_1,S_2)$, $(S_3,S_4,S_5)$, \dots.
  In each of the parts at least one of the $S_i$ must be contained in $I$, but choosing the middle part gives a dominating independent set, so we get
  $\idom{G^p}\le \frac{4p}{3}\cdot \lceil \frac{k}{2}\rceil$ in each case and  $\idom{G^p}= \frac{2kp}{3}$ if $k$ is even, as all parts have the same size $\lceil \frac{k}{2}\rceil=  \frac{k}{2}$

  In case $k$ is odd, it is also not difficult to prove the exact bound. Choosing again all $S_{i+3h}$ for some $i$ with the index modulo $4p$ results in 
  $\frac{4p}{6}$ sets of size $\lfloor \frac{k}{2}\rfloor$ and $\frac{4p}{6}$ sets of size $\lceil \frac{k}{2}\rceil$, so again we get
  $\idom{G^p}\le  \frac{2kp}{3}$. On the other hand we can assume that $S_0$ has size $\lfloor \frac{k}{2}\rfloor$, $S_1$ has size
$\lceil \frac{k}{2}\rceil$ and we   can split $S_0,\dots ,S_{4p-1}$ into segments $(S_0,\dots ,S_{11})$, $(S_{12},\dots ,S_{23})$, \dots
  each containing twelve sets. Even assuming that the first and last set in a segment can be dominated from the outside, it can be checked that each dominating set contains two
  sets of size $\lceil \frac{k}{2}\rceil$ and at least $2$ sets of size $\lfloor \frac{k}{2}\rfloor$  of the segment or one set of size $\lceil \frac{k}{2}\rceil$ and at least $4$
  sets of size $\lfloor \frac{k}{2}\rfloor$. As

  \[ \frac{4(k-1)}{2}+\frac{k+1}{2}=\frac{5k-3}{2}\ge \frac{4k}{2} \]

  we can conclude that each segment contains at least $2k$ vertices of the dominating set, so $\idom{G^p}\ge 2k\cdot \frac{4p}{12}=\frac{2kp}{3}$ and finally

  \[\domfrac{G^p} = \frac{2kp}{3}/2p= \frac{k}{3}\]
 
  The upper bound $\frac{k}{2}$ is proven for general k-regular graphs in \cite{indepratiok2}, but follows (with ``$\le$'' instead of ``$<$'')for the bipartite case also from the trivial observation $\idom{G}\le \frac{|V|}{2}$
    with part (b) of Corollary~\ref{cor:sigma2}.

\end{proof}

To improve our bound for planar cubic graphs, we still need one short lemma before the final theorem. It is used for replacing a small finite part in each graph of an infinite sequence of graphs of genus $1$ to make them planar and have the
same asymptotic bound for the plane, as formerly just for the torus.

\begin{lemma}\label{lem:cut}

  Let $G_1,G_2,\dots $ be an infinite sequence of graphs with $\idom{G_j}=j\cdot k_{\gamma_i}$ and $\dom{G_j}=j\cdot k_{\gamma}$. Furthermore let $G'_1,G'_2,\dots $ be an infinite sequence of graphs, that contain the same finite induced
  subgraph $H$, so that there is a constant $k$ so that for each $j$ the graph $G'_j$ can be obtained from $G_j$ by removing $k$ edges and connecting $H$ and the modified $G_j$ by $2k$ edges.

  Then $\lim_{j\to \infty} \domfrac{G'_j}\ge \domfrac{G_1}$.

\end{lemma}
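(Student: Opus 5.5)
The plan is to show that the modification changes both invariants by at most a constant independent of $j$. Since $\idom{G_j}$ and $\dom{G_j}$ grow linearly in $j$, such bounded changes vanish in the ratio. Let $h=|V(H)|$. Let $E_j$ be the set of $k$ removed edges, with endpoint set $X_j$, so $|X_j|\le 2k$.

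\emph{Upper bound on $\dom{G'_j}$.} Take a minimum dominating set $D$ of $G_j$ and set $D'=D\cup V(H)\cup X_j$. Every vertex of $H$ lies in $D'$. A vertex of the modified $G_j$ either was dominated in $G_j$ via an edge that still exists, or was dominated via a removed edge, in which case it is itself in $X_j\subseteq D'$. Hence $\dom{G'_j}\le jk_\gamma+h+2k$.

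\emph{Lower bound on $\idom{G'_j}$.} Take a minimum independent dominating set $I'$ of $G'_j$ and let $I_0=I'\setminus V(H)$. This set is independent in $G_j$, except possibly for the removed edges, which have both ends in $X_j$. Let $U$ be the set of vertices of the modified $G_j$ that are not dominated by $I_0$ within $G_j$. Any such vertex was dominated in $G'_j$ by a vertex of $H$ across one of the $2k$ connecting edges, so $|U|\le 2k$. Now repair $I_0$ into an independent dominating set of $G_j$ as follows.
\begin{enumerate}
\item Remove from $I_0$ all vertices of $X_j$; this removes at most $2k$ vertices and kills every conflict along the removed edges.
\item The newly undominated vertices lie in $X_j$ or in $N_{G_j}(X_j)$, while the previously undominated ones lie in $U$.
\item Greedily add a maximal independent set among the currently undominated vertices. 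This yields an independent dominating set of $G_j$.
\end{enumerate}

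\emph{Main obstacle.} The hard part is bounding the size of $N(X_j)$ uniformly in $j$. If the graphs have bounded maximum degree $\Delta$ (as in the intended application to cubic graphs), at most $2k(\Delta+1)+2k$ vertices are added. Without a degree bound I would instead argue more carefully: only vertices whose dominators were all removed need fixing, and for each removed vertex $x$ we could try to put its other endpoint in, but independence may fail. So I would simply assume bounded degree, which holds in all the paper's applications. Granting this, there is a constant $c$, independent of $j$, with
\[
jk_{\gamma_i}=\idom{G_j}\le \idom{G'_j}+c .
\]

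\emph{Conclusion.} Combining the two bounds,
\[
\domfrac{G'_j}\ge \frac{jk_{\gamma_i}-c}{jk_\gamma+h+2k}\longrightarrow \frac{k_{\gamma_i}}{k_\gamma}=\domfrac{G_1}.
\]
So the $\liminf$ of $\domfrac{G'_j}$ is at least $\domfrac{G_1}$, which gives the stated inequality for the limit.
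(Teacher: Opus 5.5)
Your argument has the same skeleton as the paper's proof. Both show $\gamma(G'_j)\le\gamma(G_j)+O(1)$ and $\gamma_i(G_j)\le\gamma_i(G'_j)+O(1)$, then let the linear growth in $j$ absorb the constants. You are more careful at the two points where the paper's proof is loose.

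\begin{itemize}
\item The paper claims $\gamma_i(G_j)\le\gamma_i(G'_j)+2k$. It takes $I'\cap V(G_j)$ and fills it up with a maximal independent set of the undominated neighbours of $H$. This tacitly assumes $I'\cap V(G_j)$ is independent in $G_j$, which fails when both ends of a removed edge lie in $I'$. Your step of deleting $X_j$ and re-dominating is exactly what repairs this.
\item The paper's claim that $D\cup V(H)$ dominates $G'_j$ tacitly assumes $H$ is attached at the endpoints of the removed edges. Adding $X_j$, as you do, removes that assumption.
\end{itemize}

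The bounded-degree hypothesis you add is not a gap in your proof; the lemma as stated is false without it. Let $G_j$ be the double star with adjacent centres $a,b$, each carrying $j+1$ leaves, together with $2j-2$ isolated vertices. Then $\gamma(G_j)=2j$ and $\gamma_i(G_j)=(j+2)+(2j-2)=3j$, so $\gamma_i(G_1)/\gamma(G_1)=3/2$. Delete $ab$ (so $k=1$) and attach $H=K_1=\{h\}$ by the edges $ha,hb$. Now $\{a,b\}$ together with the isolated vertices is independent and dominating. Hence $\gamma_i(G'_j)=\gamma(G'_j)=2j$ and the ratio is $1$ for every $j$. In particular the paper's inequality $\gamma_i(G_j)\le\gamma_i(G'_j)+2k$ fails for $j\ge 3$. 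For a connected counterexample, replace the isolated vertices by a path $c_1\dots c_{2j-2}$ with a pendant vertex at each $c_i$, joined to $a$ by the edge $ac_1$.

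Your repair costs $O(k\Delta)$, and a bound on the degrees of the endpoints of the removed edges already suffices. The lemma is only applied to cubic and quartic graphs, so your corrected version is the one that actually supports the planar entries of the final theorem. Reading the conclusion as a statement about the $\liminf$ is also right, since the ``$\lim$'' need not exist a priori.
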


\begin{proof}

  For a fixed $j$ let $N(H)$ denote the subgraph $H$ together with its at most $2k$ neighbours. Then $\idom{G_j}\le \idom{G'_j}+2k$ as an independent dominating set of $G_j$ can be constructed by taking the intersection of any
  independent dominating set of $G'_j$ with $G_j$ and filling it up with a maximal independent set of the -- at most $2k$ -- neighbours which are not yet saturated.

  On the other hand $\dom{G'_j}\le \dom{G_j}+|H|$ as a dominating set of $G_j$ together with all vertices of $H$ is of course a dominating set of $G'_j$. So with $k_0=\max\{|H|,2k\}$ we get

\begin{align*}
   \domfrac{G'_j} & \ge & \frac{\idom{G_j}-k_0}{\dom{G_j}+k_0}=\frac{\idom{G_j}}{\dom{G_j}+k_0}-\frac{k_0}{\dom{G_j}+k_0}=\frac{\dom{G_j}}{\dom{G_j}+k_0}\cdot \frac{\idom{G_j}}{\dom{G_j}}-\frac{k_0}{\dom{G_j}+k_0}\\
   & = & \frac{\dom{G_j}}{\dom{G_j}+k_0}\cdot \frac{\idom{G_j}}{\dom{G_j}}-\frac{k_0}{\dom{G_j}+k_0}= \frac{j\cdot k_{\gamma}}{j\cdot k_{\gamma}+k_0}\cdot \domfrac{G_j}-\frac{k_0}{j\cdot k_{\gamma}+k_0} \hspace*{1.3cm}\\
\end{align*}

As $\domfrac{G_j}=\domfrac{G_1}$ for all $j$, this converges to $\domfrac{G_1}$.

  \end{proof}

For the following tables, note that upper bounds for a certain class $\cal C$ -- which essentially mean that there are no graphs with a certain property -- are also valid 
for each subset of the class. So upper bounds for $\domfracC{\cal C}$ are also upper bounds for $\domfracCinf{\cal C}$.
Lower bounds -- which come from graphs with these properties -- extend to supersets of the class, but not necessarily to all subsets.

\begin{theorem}

  For graphs without a limit on the genus, we have the following bounds:
  \bigskip

	\begin{center}
    \begin{tabular}{c|c|c|c|}
        ${\cal C}$ & $\domfracCinf{\cal C} \ge $ & $\domfracC{\cal C} \ge $ & $\domfracCinf{\cal C}, \domfracC{\cal C} \le $\\
      \hline
          cubic, 3-connected, & \multirow{2}{*}{${\displaystyle\frac{5}{4}}$}   & \multirow{2}{*}{$\displaystyle\frac{9}{7}$}   & \multirow{2}{*}{$\displaystyle\frac{4}{3}$}   \\
          $|V|>10$ &    &    &    \\
          \hline
          cubic, no 4-cycles, &  \multirow{2}{*}{$\displaystyle\frac{7}{6}$}   &  \multirow{2}{*}{$\displaystyle\frac{7}{6}$}   & \multirow{2}{*}{$\displaystyle\frac{5}{4}$}   \\
          2-connected &    &    &    \\
            \hline
          cubic, bipartite &  \multirow{2}{*}{$\displaystyle\frac{5}{4}$}   &  \multirow{2}{*}{$\displaystyle\frac{5}{4}$}   & \multirow{2}{*}{$\displaystyle\frac{4}{3}$}   \\
          3-connected, $|V|>10$ &    &    &    \\
          \hline
           quartic, &  \multirow{2}{*}{$\displaystyle\frac{3}{2}$}   &  \multirow{2}{*}{$\displaystyle\frac{5}{3}$}   & \multirow{2}{*}{$\displaystyle\frac{9}{5}$}   \\
          4-connected, $|V|>8$ &    &    &    \\
          \hline
            quartic, bipartite &  \multirow{2}{*}{$\displaystyle\frac{3}{2}$}   &  \multirow{2}{*}{$\displaystyle\frac{3}{2}$}   & \multirow{2}{*}{$\displaystyle\frac{9}{5}$}   \\
          4-connected, $|V|>8$ &    &    &    \\
          \hline
             5-regular, &  \multirow{2}{*}{$\displaystyle\frac{27}{14}$}   &  \multirow{2}{*}{$\displaystyle\frac{27}{14}$}   & \multirow{2}{*}{$\displaystyle\frac{16}{7}$}   \\
          5-connected, $|V|>10$ &    &    &    \\
          \hline
            5-regular, bipartite & \multirow{2}{*}{$\displaystyle\frac{93}{56}$}   &  \multirow{2}{*}{$\displaystyle\frac{93}{56}$}   & \multirow{2}{*}{$\displaystyle\frac{16}{7}$}   \\
          5-connected, $|V|>10$ &    &    &    \\
          \hline

    \end{tabular}
   	\end{center}
        \bigskip

    For planar graphs we have the following bounds:

        \bigskip
	\centering
    \begin{tabular}{c|c|c|c|}
        ${\cal C}$ & $\domfracCinf{\cal C} \ge $ & $\domfracC{\cal C} \ge $ & $\domfracCinf{\cal C}, \domfracC{\cal C} \le $\\
      \hline
          cubic polyhedra &  \multirow{2}{*}{$\displaystyle\frac{6}{5}$}   &  \multirow{2}{*}{$\displaystyle\frac{6}{5}$}   & \multirow{2}{*}{$\displaystyle\frac{9}{7}$}   \\
          $|V|>10$ &    &    &    \\
          \hline
                   cubic bipartite &  \multirow{2}{*}{$\displaystyle\frac{7}{6}$}   &  \multirow{2}{*}{$\displaystyle\frac{7}{6}$}   & \multirow{2}{*}{$\displaystyle\frac{9}{7}$}   \\
          polyhedra &    &    &    \\
          \hline
          cubic, planar, &  \multirow{2}{*}{$\displaystyle\frac{5}{4}$}   &  \multirow{2}{*}{$\displaystyle\frac{5}{4}$}   & \multirow{2}{*}{$\displaystyle\frac{9}{7}$}   \\
          2-connected, $|V|>10$ &    &    &    \\
          \hline
          quartic polyhedra &  \multirow{2}{*}{$\displaystyle\frac{6}{5}$}   &  \multirow{2}{*}{$\displaystyle\frac{4}{3}$}   & \multirow{2}{*}{$\displaystyle\frac{9}{5}$}   \\
          4-connected &    &    &    \\
          \hline
         quartic, planar &  \multirow{2}{*}{$\displaystyle\frac{4}{3}$}   &  \multirow{2}{*}{$\displaystyle\frac{4}{3}$}   & \multirow{2}{*}{$\displaystyle\frac{9}{5}$}   \\
          2-connected, $|V|>10$ &    &    &    \\
          \hline

    \end{tabular}

\end{theorem}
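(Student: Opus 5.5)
The theorem is a table of bounds, and I would prove it entry by entry. Each entry falls into one of three kinds of argument.

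\textbf{Upper bounds.} The last column comes from Corollary~\ref{cor:domfrac} and Corollary~\ref{cor:harvest}, using the remark that an upper bound for a class also holds for every subclass.
\begin{itemize}
\item For cubic graphs with $|V|>10$, $K_{3,3}$ and $C_5\square K_2$ are excluded. Corollary~\ref{cor:harvest}(1) with $k=3$ gives $\frac{(k-1)^2}{2k-3}=\frac43$. This covers the 3-connected and the bipartite 3-connected rows.
\item The row ``cubic, no 4-cycles'' is Corollary~\ref{cor:harvest}(4), giving $\frac54$.
\item For the planar cubic classes I use Corollary~\ref{cor:harvest}(3), giving $\frac97$. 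A 2-connected planar cubic graph other than $C_5\square K_2$ is covered by that item. I would check that $|V|>10$ excludes the 5-prism in all three planar cubic rows.
\item For quartic graphs with $|V|>8$, $K_{4,4}$ is excluded, and Corollary~\ref{cor:harvest}(1) with $k=4$ gives $\frac{9}{5}$.
\item For 5-regular graphs with $|V|>10$, $K_{5,5}$ is excluded, and the same item gives $\frac{16}{7}$.
\end{itemize}
In every case the prerequisite $1+\sigma'\ge k\beta$ holds by the lemma preceding Theorem~\ref{thm:fraction}, since $\beta\le\frac12$.

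\textbf{Lower bounds for $\domfracC{\cal C}$.} The third column only needs one example graph per class with the stated ratio.
\begin{itemize}
\item $\frac97$ comes from $G_{9,7}$ in Figure~\ref{fig:cubic3con9_7}, which is cubic, 3-connected and has 24 vertices.
\item $\frac53$ comes from $C_{13}(1,3)$ in Figure~\ref{fig:quartic_5_3}.
\item $\frac43$ for quartic polyhedra comes from the graph in Figure~\ref{fig:4regpolyhedron}.
\item Every other entry of this column equals the $\domfracCinf{}$ entry of its row. Any graph in an infinite sequence realizing that ratio also witnesses it for $\domfracC{}$.
\end{itemize}

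\textbf{Lower bounds for $\domfracCinf{\cal C}$.} This is the substantial part. For each class I would exhibit a cycle extension pair $(G,D)$ and proceed as follows.
\begin{enumerate}
\item Compute $W(H_{\gamma_i})$ and $W(H_{\gamma})$ with the Karp algorithm.
\item Apply part (iii) of the cycle-of-graphs lemma to get infinitely many $r$ with $\domfrac{G^r}=W(H_{\gamma_i})/W(H_{\gamma})$.
\item Verify by computer that $G^1$ and $G^2$ have the required regularity, bipartiteness and connectivity.
\item Conclude by Lemma~\ref{lem:cycleext} that every $G^p$ has the required connectivity, checking condition (a) or (b) by hand.
\end{enumerate}
For the quartic bipartite row, the $k=4$ case of the lemma for Figure~\ref{fig:kconstr} gives $\frac{k}{3}=\frac43$ directly. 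That is below the claimed $\frac32$, so this row needs a dedicated building block.

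For the planar rows there is an extra step. Cycles of copies of a planar block are often only toroidal, because the wrap-around edges force genus 1. In that case I would use Lemma~\ref{lem:cut}: cut a bounded number $k$ of edges of $G^j$ and patch in a fixed planar gadget $H$ of matching degree and connectivity. The limit ratio is then preserved, and since $\domfracCinf{}$ is an infimum over tails, the limit bound suffices.

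The main obstacle is that the lower bounds rest entirely on specific computer-found blocks and computed minimum mean cycle values. The proof would consist of naming each block (from House of Graphs, search term idomfrac) and certifying its values. The most delicate steps are two: the planar patching, where the gadget must keep the graph regular, planar and of the right connectivity, and confirming the hand-checkable path conditions of Lemma~\ref{lem:cycleext}.
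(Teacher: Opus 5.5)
Your proposal is correct and is essentially the paper's proof. The upper bounds come from Corollary~\ref{cor:harvest}, the single-graph lower bounds from the specific graphs in the figures, and the asymptotic lower bounds from the cycle extension pairs (via $W(H)$ and Lemma~\ref{lem:cycleext}), with Lemma~\ref{lem:cut} used for the two 2-connected planar rows. Only small details need tightening: the cubic bipartite polyhedra row has no $|V|>10$ condition, so the 5-prism is excluded there because it is not bipartite; the bipartite polyhedra family is bipartite only for even $p$, so $G^1$ will fail your bipartiteness check and you must restrict to even $p$; and for the Lemma~\ref{lem:cut} rows no single graph need attain the ratio, so the middle-column entry should come from the trivial inequality $\domfracC{\mathcal{C}}\ge\domfracCinf{\mathcal{C}}$ rather than from a witnessing graph.
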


\begin{proof}

  The numbers in the tables come from the bounds in Corollary~\ref{cor:harvest} and the graphs, graph sequences, and results explained in the captions of Figures~\ref{fig:cubic3con9_7} to \ref{fig:4regpolyhedron},
  and \ref{fig:cubicpolyhedron} to \ref{fig:5regbip}.

        The bounds for $\domfracCinf{\cal C}$ for the 2-connected planar cases also use Lemma~\ref{lem:cut}. Here one copy of $G$ must be replaced by the union of two small,
        suitable graphs cutting the torus open. The fact that the graphs are 2-connected can be proven in a similar way as for complete cycles, but is -- just
        like planarity -- also obvious in these cases.

\end{proof}
        

\subsection*{Final remarks}

We have given upper and lower bounds for $\domfracC{\cal C}$ for several classes ${\cal C}$ and developed some theorems and programs that will hopefully turn out to be useful
for further research. Nevertheless it is striking that for none of the classes the lower and upper bounds coincide. For some classes, sharp bounds for
$\frac{\idom{G}}{|V|}$ are known, but these do not necessarily imply sharp bounds for $\domfrac{G}$. E.g.\ the graph family ${\cal G}_{cubic}$ defined in
\cite{indepdom_pl_cubic} has an optimal value of  $\frac{\idom{G}}{|V|}$ for the class studied. This value is higher than for the planar modification of the family shown in
Figure~\ref{fig:cubic2conplane}, which belongs to the same family. Nevertheless the graphs in ${\cal G}_{cubic}$ have a smaller value of $\domfrac{G}$, as also
the domination number is higher than for the graphs from Figure~\ref{fig:cubic2conplane} .

While often small examples are found with a value for $\domfrac{G}$ that can not be improved for many larger orders, which might suggest optimality, the 
examples for 2-connected plane graphs show that such a conclusion is premature: here the larger graphs have higher values and it is not even known whether the
supremum is ever realized by a graph in the class.  

The bounds for $\sigma_k$ determined in this article depend only on $k$ and do not consider further restrictions on the class -- like planarity or small, forbidden subgraphs. Maybe taking
such restrictions into account, better values for $\sigma({\cal C})$ and therefore also better bounds for $\domfracC{\cal C}$ can be obtained in some cases.

Though we used computers when searching for cycle extension pairs, there was no real {\em complete search}, but rather some heuristic. We searched for graphs
with a small value of $\domfrac{G}$ and checked whether these graphs can be used to construct good cycle extension pairs. As $\domfrac{G^p}$ can increase as
well as decrease for $p>1$, this is just some heuristic and does not necessarily give optimal results. As cycle extension pairs are just one way to construct
infinite sequences, we think that a time-extensive complete search would probably not be justified.

There might also be interesting classes ${\cal C}$ where $\domfracCinf{\cal C}$ or even $\domfracC{\cal C}$ are $1$. E.g.\ for $G$ in the class of cubic
polyhedra with maximum face size 6 -- including the class of chemically especially interesting fullerenes -- computations suggest that we might have that
$\idom{G}-\dom{G}\le 1$, which would imply $\domfracCinf{\cal C}=1$. For their duals -- that is plane triangulations with maximum degree 6 -- it even seems that
$\idom{G}=\dom{G}$, so $\domfracC{\cal C}=1$. This might be explained by the finite number of deviations from the hexagonal lattice, resp.\ the triangular
lattice. Nevertheless our computations were only for cubic polyhedra with at most 128 vertices and triangulations with at most 85 vertices\dots

Due to the nature of the cycle extension method, the polyhedral families we constructed here all contain graphs with two faces with arbitrarily high size. To show that this property is not essential, we also constructed two families of cubic polyhedra with bounded face size for which $\domfrac{G_i}>1$. They are shown in Figures~\ref{fig:65bounded} and \ref{fig:fc7}, where the first contains relatively large faces but has a good ratio, and the second has only small faces but a worse ratio.  In each case we start from a genus 1 family, cut open the torus at the dashed edges of one block, and add two vertices of degree 3 to create a planar graph.
This shows that our previous remark about cubic polyhedra with maximum face size 6 cannot be extended to face size 7 or higher. Since there are only finitely many cubic polyhedra with maximum face size 5 or less, this leaves 6 as the only open case.

\medskip

The programs for computing the domination number and the independent domination number were tested by comparing the results against the results of a completely independent program.
We tested e.g. all cubic graphs on up to 26 vertices, all quartic graphs on up to 18 vertices, etc.\dots For details see the header of the programs. The results of the program computing $W(H)$ were tested by explicitly constructing
the graphs $G^p$ for the given cycle lengths and computing $\dom{G^p}$ and $\idom{G^p}$.

\begin{figure}[h]
\begin{center}
  \includegraphics[width=85mm]{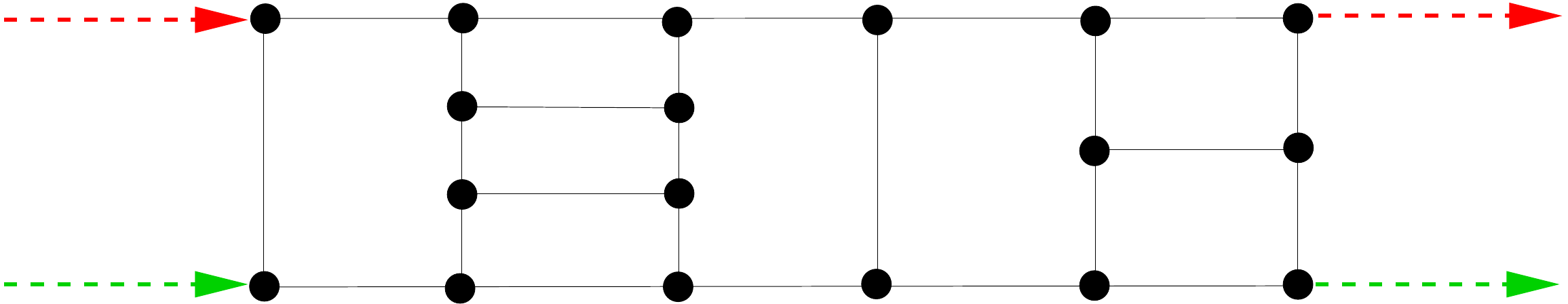}
\end{center}
\caption{A cycle extension pair $(G,D)$ -- with the set $D$ of directed edges given as coloured, dashed arrows -- for which $G^p$ is a cubic polyhedron with $\domfrac{G^p}= \frac{6}{5}$ for each $p\ge 1$. $W(H_{\gamma_i})=6$ and $W(H_{\gamma})=5$, both realized for a cycle of length $1$.}\label{fig:cubicpolyhedron}
\end{figure}

\begin{figure}[h]
\begin{center}
  \includegraphics[width=75mm]{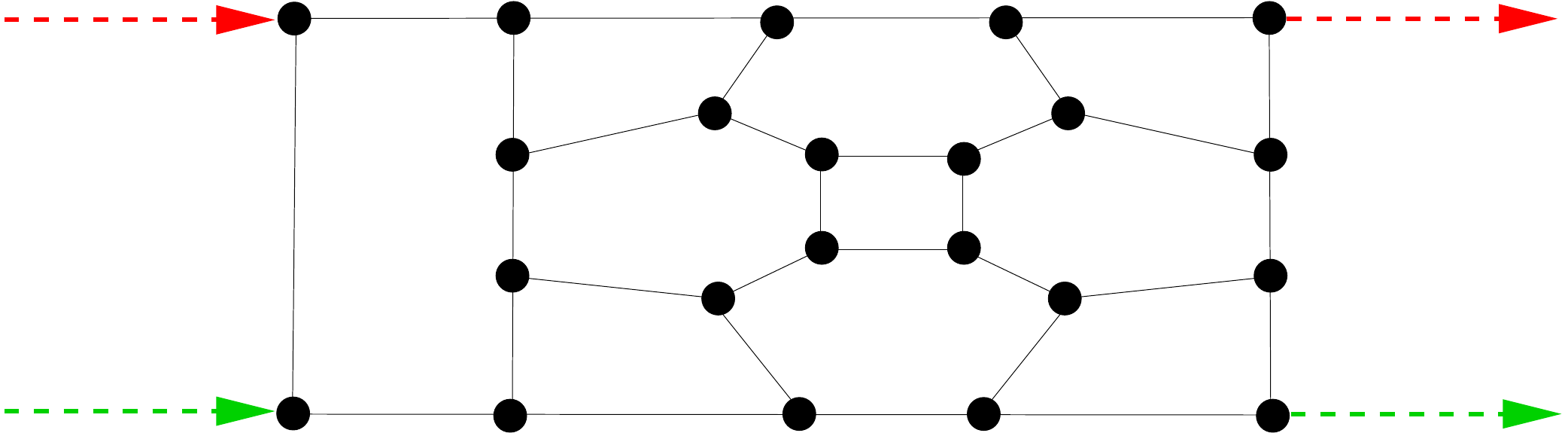}
\end{center}
\caption{A cycle extension pair $(G,D)$ for which $G^p$ is a cubic polyhedron with $\domfrac{G^p}= \frac{7}{6}$ for each $p\ge 1$. $W(H_{\gamma_i})=7$ and $W(H_{\gamma})=6$, both realized for a cycle of length $1$. For even $p$, $G^p$ is also bipartite.}\label{fig:cubicbippolyhedron}
\end{figure}

\begin{figure}[h]
\begin{center}
  \includegraphics[width=70mm]{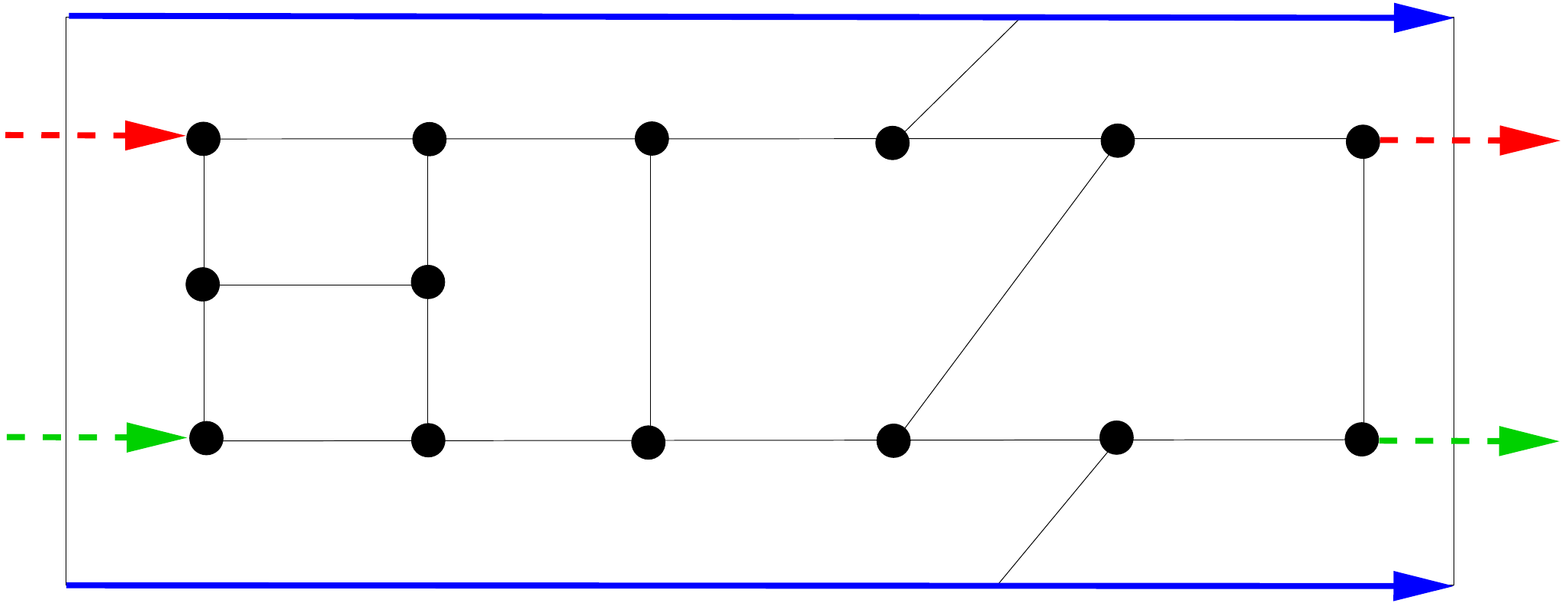}
\end{center}
\caption{A cycle extension pair $(G,D)$ for which $G^p$ is a cubic, 3-connected graph with genus 1 for which $\domfrac{G^p}= \frac{5}{4}$ for each $p\ge 1$. $W(H_{\gamma_i})=5$ and $W(H_{\gamma})=4$, both realized for a cycle of length $1$.
The two blue arrows need to be identified to form a tube which finally forms a torus. }\label{fig:cubic2conplane}
\end{figure}

\begin{figure}[h]
  \begin{center}
    \resizebox{0.45\textwidth}{!}{
      \input{3reg_girth6_frac_7_6.tikz}
      }
\end{center}
\caption{A cycle extension pair $(G,D)$ for which $G^p$ is cubic, 2-connected, has girth 6, and $\domfrac{G^p}= \frac{7}{6}$ for each $p\ge 1$.
   $W(H_{\gamma_i})=7$ and $W(H_{\gamma})=6$, both realized for a cycle of length $1$. The genus of $G^p$ increases with $p$.}\label{fig:cubicbip}
\end{figure}

\begin{figure}[h]
  \begin{center}
    \resizebox{0.32\textwidth}{!}{
      \input{3reg_bip_frac_5_4_edges_9_5_7_11.tikz}
      }
\end{center}
\caption{A cycle extension pair $(G,D)$ for which $G^p$ is cubic, bipartite, 3-connected, and $\domfrac{G^p}= \frac{5}{4}$ for each $p\ge 1$.
   $W(H_{\gamma_i})=5$ and $W(H_{\gamma})=4$, both realized for a cycle of length $1$. The genus of $G^p$ increases with $p$. Here part (b) of Lemma~\ref{lem:cycleext} must be applied.}\label{fig:cubicgirth6}
\end{figure}

\begin{figure}[h]
   \begin{center}
      \includegraphics[width=55mm]{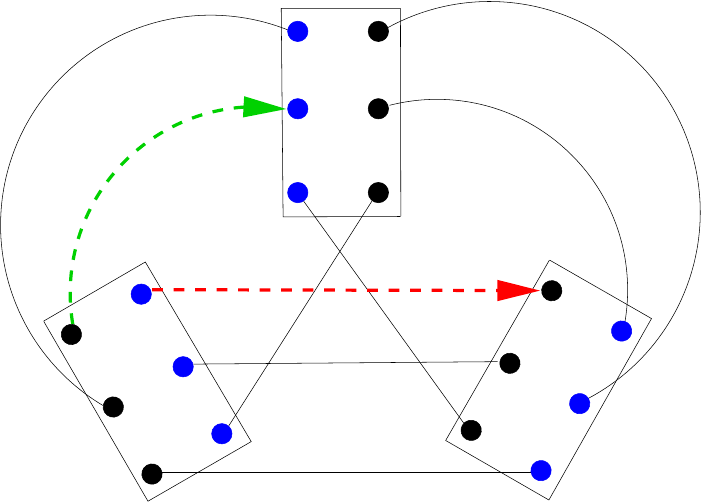}
\end{center}
   \caption{A cycle extension pair $(G,D)$ for which $G^p$ is 4-regular, bipartite, 4-connected, and $\domfrac{G^p}= \frac{3}{2}$ for each $p\ge 1$.
    The boxes represent complete bipartite graphs $K_{3,3}$. $W(H_{\gamma_i})=6$ and $W(H_{\gamma})=4$, both realized for a cycle of length $1$.}\label{fig:quartic}
\end{figure}

\begin{figure}[h]
    \begin{center}
    \resizebox{0.4\textwidth}{!}{
      \input{4reg_planar_4con_frac_6_5.tikz}
      }
\end{center}
  \caption{A cycle extension pair $(G,D)$ for which $G^p$ is planar, 4-regular, 4-connected, and $\domfrac{G^p}= \frac{6}{5}$ for each $p\ge 1$.
    $W(H_{\gamma_i})=6$ and $W(H_{\gamma})=5$, both realized for a cycle of length $1$.}\label{fig:quartic_plane}
\end{figure}

\begin{figure}[h]
\begin{center}
  \includegraphics[width=70mm]{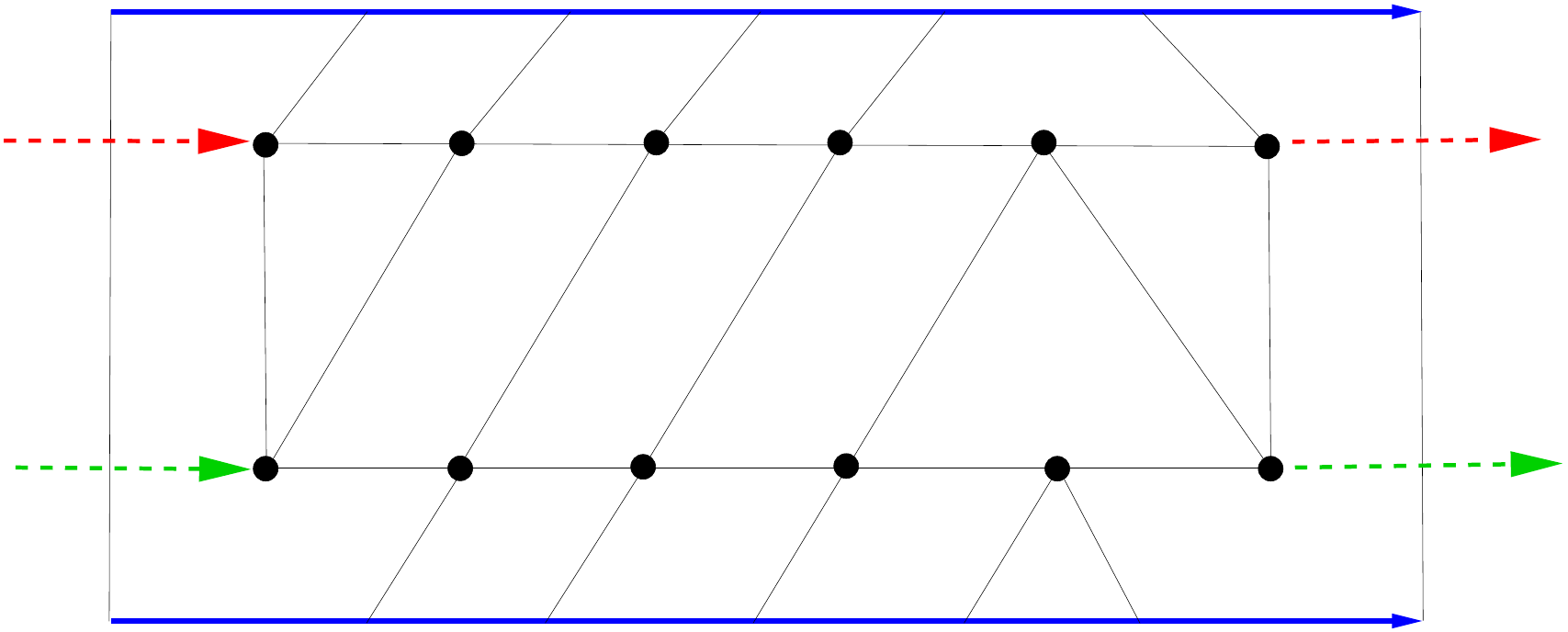}
\end{center}
\caption{A cycle extension pair $(G,D)$ for which $G^p$ is a quartic, 4-connected graph with genus 1 for which $\domfrac{G^p}= \frac{4}{3}$ for each $p\ge 1$. $W(H_{\gamma_i})=4$ and $W(H_{\gamma})=3$, both realized for a cycle of length $1$. }\label{fig:quarticplane}
\end{figure}

\begin{figure}[h]
  \begin{center}
      \includegraphics[width=55mm]{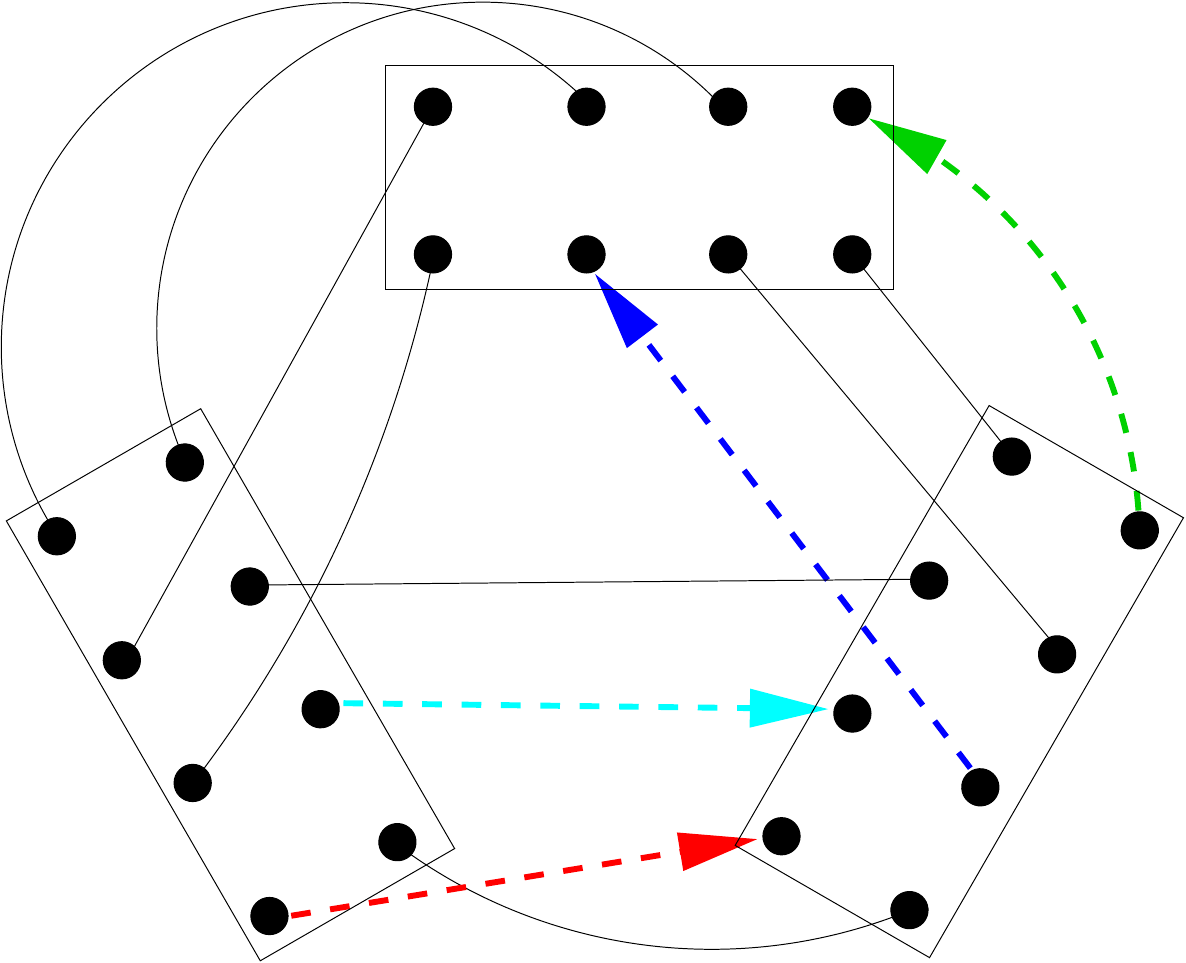}
\end{center}
\caption{A cycle extension pair $(G,D)$ with $\domfrac{G^1}=\frac{9}{5}$ for which $G^p$ is 5-regular,
  5-connected, and $\domfrac{G^p}= \frac{27}{14}$ for each $p$ that is a multiple of $3$.  The boxes represent complete bipartite graphs $K_{4,4}$. $W(H_{\gamma_i})=9$ and $W(H_{\gamma})=\frac{14}{3}$, realized with
  cycle length $1$ for $\gamma_i$ and $3$ for $\gamma$.}\label{fig:5r4g}
\end{figure}

\begin{figure}[h]
  \begin{center}
      \includegraphics[width=55mm]{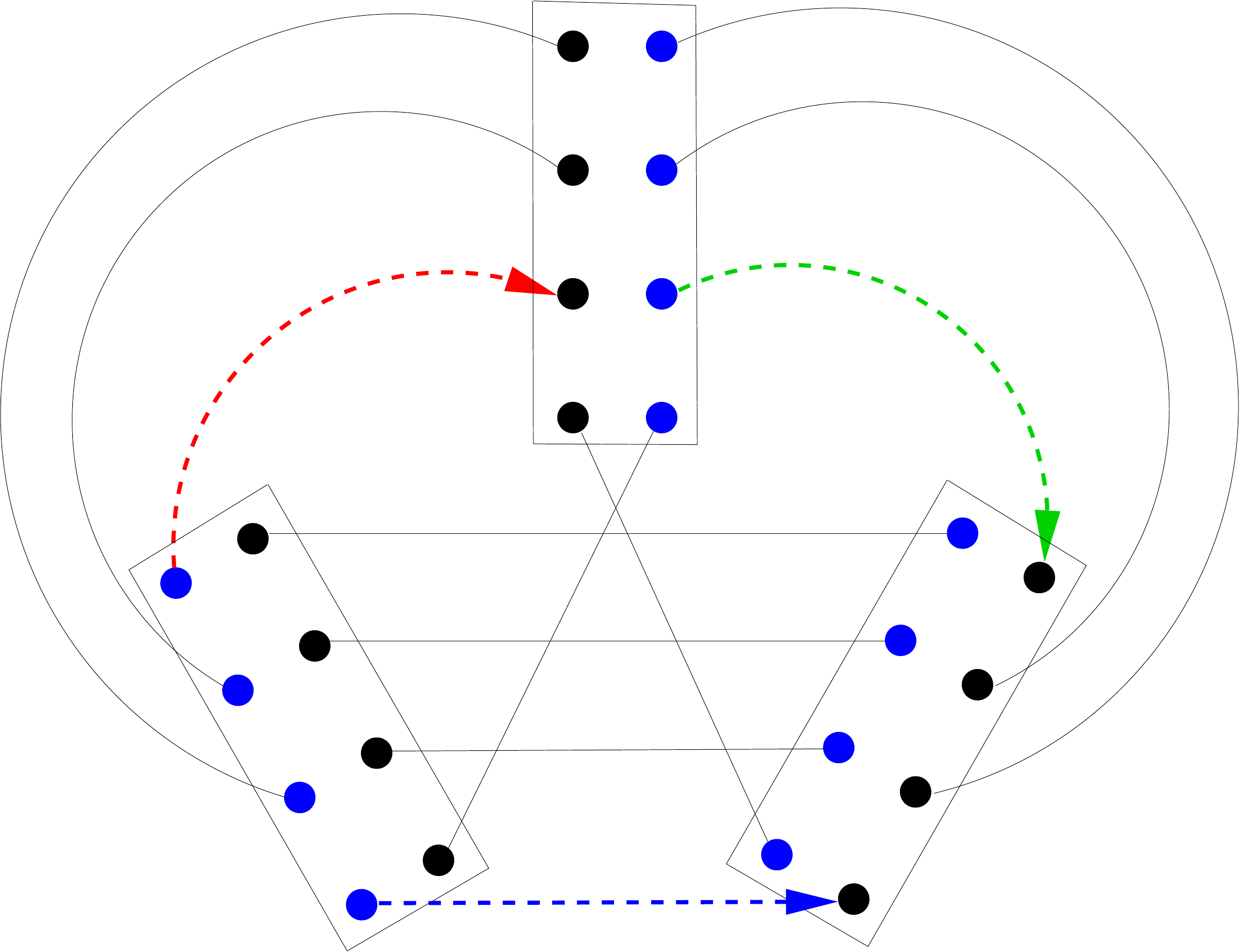}
\end{center}
\caption{A cycle extension pair $(G,D)$ with $\domfrac{G^1}=\frac{8}{5}$ for which $G^p$ is 5-regular, bipartite,
  5-connected, and $\domfrac{G^p}\ge \frac{93}{56}$ for each $p$ that is a multiple of $3$.  The boxes represent complete bipartite graphs $K_{4,4}$. $W(H_{\gamma_i})=\frac{31}{4}$ and $W(H_{\gamma})=\frac{14}{3}$, realized with
  cycle length $4$ for $\gamma_i$ and $3$ for $\gamma$.}\label{fig:5regbip}
\end{figure}

\begin{figure}[h]
	\begin{center}
		\includegraphics[width=65mm]{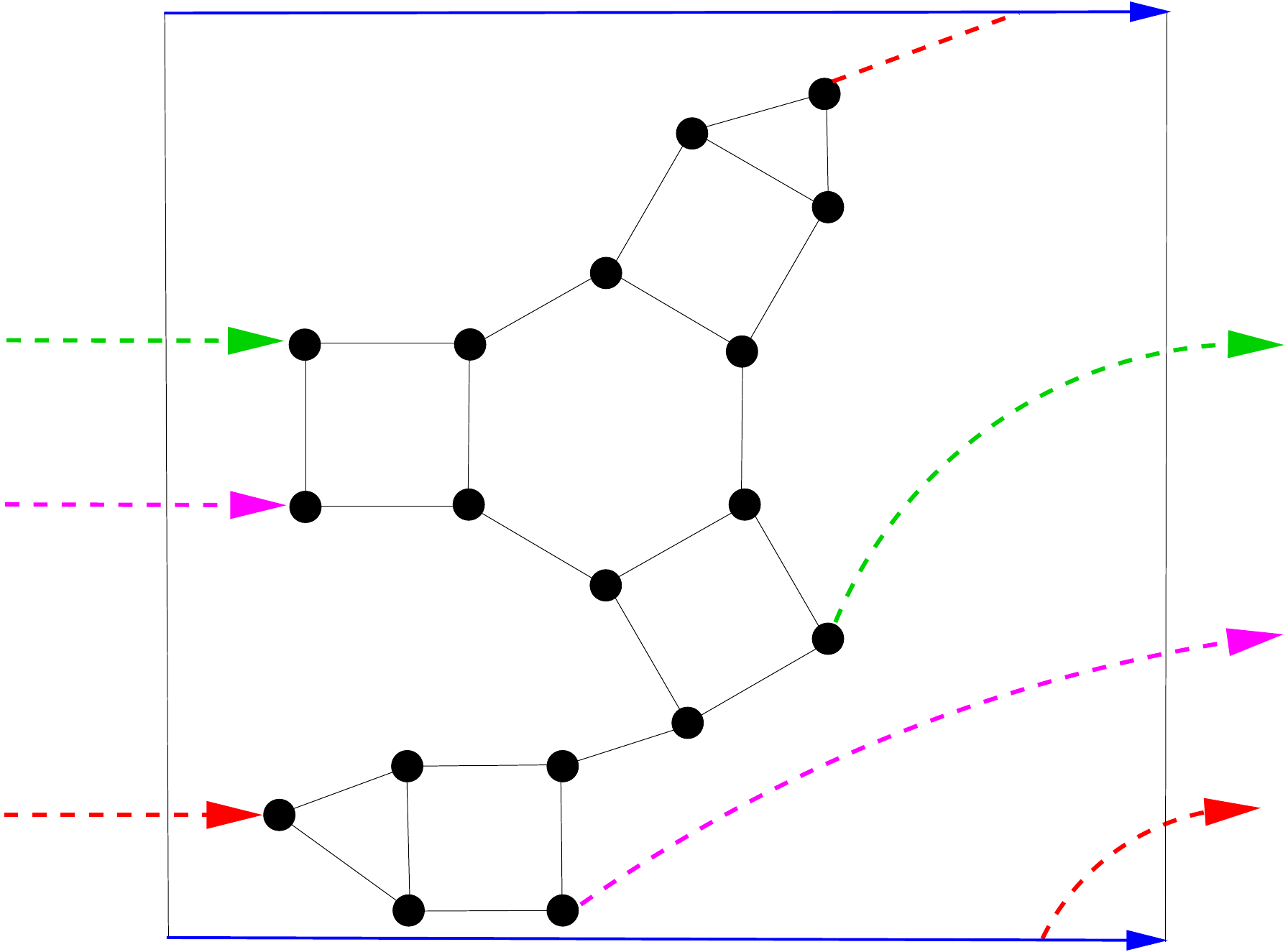}
	\end{center}
	\caption{A cycle extension pair $(G,D)$ for which $G^p$ is cubic, 3-connected, and $\domfrac{G^p}= \frac{6}{5}$.  $W(H_{\gamma_i})=6$ and $W(H_{\gamma})=5$, realized with cycles of length $1$. Every face of $G^p$ is contained in at most 3 consecutive blocks.}\label{fig:65bounded}
\end{figure}

\begin{figure}[h]
	\begin{center}
		\includegraphics[width=65mm]{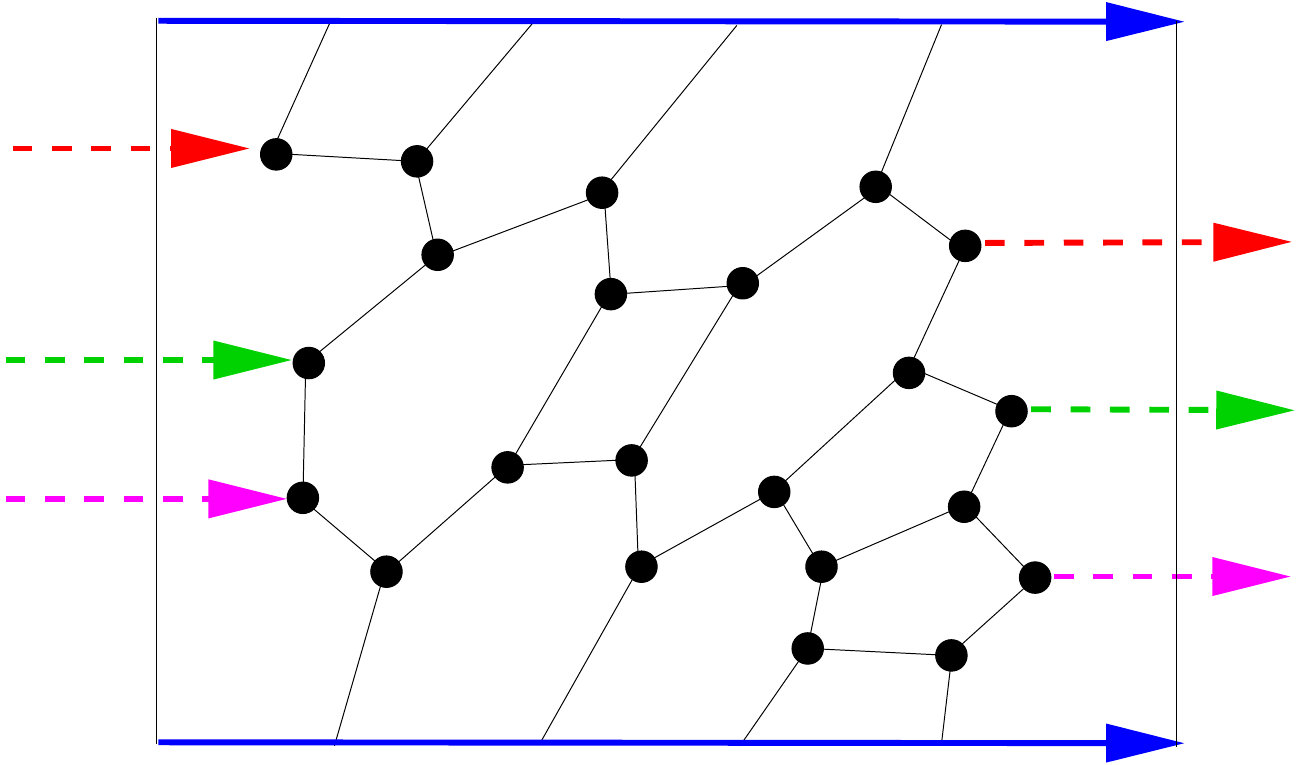}
	\end{center}
	\caption{A cycle extension pair $(G,D)$ for which $G^p$ is a cubic, 3-connected graph with genus 1 and maximum face size 7, and for which $\domfrac{G^p}= \frac{13}{12}$ for each even $p$.  $W(H_{\gamma_i})=\frac{13}{2}$ and $W(H_{\gamma})=6$, realized with cycle length $2$ for $\gamma_i$ and $1$ for $\gamma$. Every face of $G^p$ is in at most 2 consecutive blocks.}\label{fig:fc7}
\end{figure}

\section*{Acknowledgements}
We thank Carol Zamfirescu for interesting initial discussions on this topic.

\bibliographystyle{plain}
\bibliography{}

\end{document}

%% file: planar_4reg_4con_frac_4_3.tikz
\begin{tikzpicture}[scale=0.07]
\def\vertexscale{0.90}
\node [circle,black,draw,scale=\vertexscale,fill=black] (1) at (58.77853,-80.90170) {};
\node [circle,black,draw,scale=\vertexscale,fill=black] (2) at (-58.77853,-80.90170) {};
\node [circle,black,draw,scale=\vertexscale,fill=black] (3) at (95.10565,30.90170) {};
\node [circle,black,draw,scale=\vertexscale,fill=black] (4) at (24.59593,-63.83380) {};
\node [circle,black,draw,scale=\vertexscale,fill=black] (5) at (50.29040,-35.96474) {};
\node [circle,black,draw,scale=\vertexscale,fill=black] (6) at (-14.03464,-47.36718) {};
\node [circle,black,draw,scale=\vertexscale,fill=black] (7) at (-25.86934,15.96051) {};
\node [circle,black,draw,scale=\vertexscale,fill=black] (8) at (-0.00000,-6.87049) {};
\node [circle,black,draw,scale=\vertexscale,fill=black] (9) at (-17.73164,33.83667) {};
\node [circle,black,draw,scale=\vertexscale,fill=black] (10) at (-13.49236,49.63843) {};
\node [circle,black,draw,scale=\vertexscale,fill=black] (11) at (-0.00000,66.48333) {};
\node [circle,black,draw,scale=\vertexscale,fill=black] (12) at (13.49236,49.63843) {};
\node [circle,black,draw,scale=\vertexscale,fill=black] (13) at (0.00000,-31.28483) {};
\node [circle,black,draw,scale=\vertexscale,fill=black] (14) at (17.73164,33.83667) {};
\node [circle,black,draw,scale=\vertexscale,fill=black] (15) at (25.86934,15.96051) {};
\node [circle,black,draw,scale=\vertexscale,fill=black] (16) at (-23.53185,66.93602) {};
\node [circle,black,draw,scale=\vertexscale,fill=black] (17) at (-23.27919,-25.43487) {};
\node [circle,black,draw,scale=\vertexscale,fill=black] (18) at (-54.04413,37.67217) {};
\node [circle,black,draw,scale=\vertexscale,fill=black] (19) at (23.53185,66.93602) {};
\node [circle,black,draw,scale=\vertexscale,fill=black] (20) at (-52.27151,1.63786) {};
\node [circle,black,draw,scale=\vertexscale,fill=black] (21) at (-50.29040,-35.96474) {};
\node [circle,black,draw,scale=\vertexscale,fill=black] (22) at (54.04413,37.67217) {};
\node [circle,black,draw,scale=\vertexscale,fill=black] (23) at (-95.10565,30.90170) {};
\node [circle,black,draw,scale=\vertexscale,fill=black] (24) at (-0.00000,100.00000) {};
\node [circle,black,draw,scale=\vertexscale,fill=black] (25) at (23.27919,-25.43487) {};
\node [circle,black,draw,scale=\vertexscale,fill=black] (26) at (14.03464,-47.36718) {};
\node [circle,black,draw,scale=\vertexscale,fill=black] (27) at (52.27151,1.63786) {};
\node [circle,black,draw,scale=\vertexscale,fill=black] (28) at (-24.59593,-63.83380) {};
\draw [black] (1) to (2);
\draw [black] (1) to (4);
\draw [black] (1) to (5);
\draw [black] (1) to (3);
\draw [black] (2) to (23);
\draw [black] (2) to (21);
\draw [black] (2) to (28);
\draw [black] (3) to (27);
\draw [black] (3) to (22);
\draw [black] (3) to (24);
\draw [black] (4) to (28);
\draw [black] (4) to (26);
\draw [black] (4) to (5);
\draw [black] (5) to (25);
\draw [black] (5) to (27);
\draw [black] (6) to (28);
\draw [black] (6) to (17);
\draw [black] (6) to (13);
\draw [black] (6) to (26);
\draw [black] (7) to (20);
\draw [black] (7) to (18);
\draw [black] (7) to (9);
\draw [black] (7) to (15);
\draw [black] (8) to (17);
\draw [black] (8) to (20);
\draw [black] (8) to (27);
\draw [black] (8) to (25);
\draw [black] (9) to (14);
\draw [black] (9) to (18);
\draw [black] (9) to (10);
\draw [black] (10) to (16);
\draw [black] (10) to (11);
\draw [black] (10) to (12);
\draw [black] (11) to (12);
\draw [black] (11) to (16);
\draw [black] (11) to (19);
\draw [black] (12) to (19);
\draw [black] (12) to (14);
\draw [black] (13) to (26);
\draw [black] (13) to (17);
\draw [black] (13) to (25);
\draw [black] (14) to (22);
\draw [black] (14) to (15);
\draw [black] (15) to (27);
\draw [black] (15) to (22);
\draw [black] (16) to (18);
\draw [black] (16) to (24);
\draw [black] (17) to (21);
\draw [black] (18) to (23);
\draw [black] (19) to (24);
\draw [black] (19) to (22);
\draw [black] (20) to (21);
\draw [black] (20) to (23);
\draw [black] (21) to (28);
\draw [black] (23) to (24);
\draw [black] (25) to (26);
\end{tikzpicture}

%% file: 3reg_girth6_frac_7_6.tikz
  \begin{tikzpicture}[scale=0.065]
    \begin{scope}[rotate=-43]
\def\vertexscale{1.10}
\def\labelscale{1.20}
\node [circle,black,draw,scale=\vertexscale] (1) at (61.56076,-42.10936) {1};
\node [circle,black,draw,scale=\vertexscale] (2) at (-41.87656,26.61584) {2};
\node [circle,black,draw,scale=\vertexscale] (3) at (10.03398,45.93205) {3};
\node [circle,black,draw,scale=\vertexscale] (4) at (65.84590,29.88577) {4};
\node [circle,black,draw,scale=\vertexscale] (5) at (44.48535,-59.55224) {5};
\node [circle,black,draw,scale=\vertexscale] (6) at (-67.48053,31.85816) {6};
\node [circle,black,draw,scale=\vertexscale] (7) at (-8.42800,-3.19036) {7};
\node [circle,black,draw,scale=\vertexscale] (8) at (16.99362,21.88218) {8};
\node [circle,black,draw,scale=\vertexscale] (9) at (-28.45292,69.87143) {9};
\node [circle,black,draw,scale=0.9*\vertexscale] (10) at (-19.86918,47.44837) {10};
\node [circle,black,draw,scale=0.9*\vertexscale] (11) at (-61.08295,-14.15495) {11};
\node [circle,black,draw,scale=0.9*\vertexscale] (12) at (-36.81955,1.64223) {12};
\node [circle,black,draw,scale=0.9*\vertexscale] (13) at (26.83104,67.61925) {13};
\node [circle,black,draw,scale=0.9*\vertexscale] (14) at (-56.06836,-37.68204) {14};
\node [circle,black,draw,scale=0.9*\vertexscale] (15) at (-39.89803,-54.25939) {15};
\node [circle,black,draw,scale=0.9*\vertexscale] (16) at (-17.61457,-58.67273) {16};
\node [circle,black,draw,scale=0.9*\vertexscale] (17) at (66.27599,-18.33194) {17};
\node [circle,black,draw,scale=0.9*\vertexscale] (18) at (42.31962,15.51863) {18};
\node [circle,black,draw,scale=0.9*\vertexscale] (19) at (41.97645,-11.06112) {19};
\node [circle,black,draw,scale=0.9*\vertexscale] (20) at (18.47086,-35.57875) {20};
\node [circle,black,draw,scale=0.9*\vertexscale] (21) at (22.42063,-63.26458) {21};
\node [circle,black,draw,scale=0.9*\vertexscale] (22) at (-4.00181,-32.50024) {22};
\tkzDefPoint(-60.87614,79.33533){23}
\tkzDefPoint(-38.26834,-92.38795){24}
\tkzDefPoint(-79.33533,60.87614){25}
\tkzDefPoint(-13.05262,-99.14449){26}
\tkzDefPoint(13.05262,-99.14449){27}
\tkzDefPoint(-79.33533,-60.87614){28}
\tkzDefPoint(-60.87614,-79.33533){29}
\tkzDefPoint(-99.14449,-13.05262){30}
\tkzDefPoint(-99.14449,13.05262){31}
\tkzDefPoint(79.33533,-60.87614){32}
\tkzDefPoint(60.87614,-79.33533){33}
\tkzDefPoint(99.14449,13.05262){34}
\tkzDefPoint(99.14449,-13.05262){35}
\tkzDefPoint(60.87614,79.33533){36}
\tkzDefPoint(79.33533,60.87614){37}
\tkzDefPoint(-13.05262,99.14449){38}
\tkzDefPoint(13.05262,99.14449){39}
\tkzDefPoint(-38.26834,92.38795){40}
\tkzDefPoint(92.38795,-38.26834){41}
\tkzDefPoint(-92.38795,-38.26834){42}
\tkzDefPoint(38.26834,92.38795){43}
\tkzDefPoint(38.26834,-92.38795){44}
\tkzDefPoint(-92.38795,38.26834){45}
\tkzDefPoint(92.38795,38.26834){46}
\draw [->,red,dashed, line width=1.2mm] (1) to (5);
\draw [black] (1) to (17);
\draw [black] (1) to (32);
\node [draw=none,black,fill=none,scale=\labelscale] () at (83.30210,-63.91995) {9};
\draw [black] (2) to (6);
\draw [black] (2) to (10);
\draw [black] (2) to (12);
\draw [black] (3) to (10);
\draw [black] (3) to (13);
\draw [black] (3) to (8);
\draw [black] (4) to (18);
\draw [black] (4) to (37);
\node [draw=none,black,fill=none,scale=\labelscale] () at (83.30210,63.91995) {15};
\draw [black] (4) to (34);
\node [draw=none,black,fill=none,scale=\labelscale] () at (104.10171,13.70525) {6};
\draw [black] (5) to (33);
\node [draw=none,black,fill=none,scale=\labelscale] () at (63.91995,-83.30210) {6};
\draw [black] (5) to (21);
\draw [black] (6) to (25);
\node [draw=none,black,fill=none,scale=\labelscale] () at (-83.30210,63.91995) {5};
\draw [black] (6) to (31);
\node [draw=none,black,fill=none,scale=\labelscale] () at (-104.10171,13.70525) {4};
\draw [black] (7) to (8);
\draw [black] (7) to (22);
\draw [black] (7) to (12);
\draw [black] (8) to (18);
\draw [black] (9) to (23);
\node [draw=none,black,fill=none,scale=\labelscale] () at (-63.91995,83.30210) {1};
\draw [black] (9) to (38);
\node [draw=none,black,fill=none,scale=\labelscale] () at (-13.70525,104.10171) {16};
\draw [black] (9) to (10);
\draw [black] (11) to (12);
\draw [black] (11) to (14);
\draw [black] (11) to (30);
\node [draw=none,black,fill=none,scale=\labelscale] () at (-104.10171,-13.70525) {17};
\draw [black] (13) to (36);
\node [draw=none,black,fill=none,scale=\labelscale] () at (63.91995,83.30210) {14};
\draw [black] (13) to (39);
\node [draw=none,black,fill=none,scale=\labelscale] () at (13.70525,104.10171) {21};
\draw [black] (14) to (15);
\draw [black] (14) to (28);
\node [draw=none,black,fill=none,scale=\labelscale] () at (-83.30210,-63.91995) {13};
\draw [black] (15) to (29);
\node [draw=none,black,fill=none,scale=\labelscale] () at (-63.91995,-83.30210) {4};
\draw [black] (15) to (16);
\draw [black] (16) to (22);
\draw [black] (16) to (26);
\node [draw=none,black,fill=none,scale=\labelscale] () at (-13.70525,-104.10171) {9};
\draw [black] (17) to (19);
\draw [black] (17) to (35);
\node [draw=none,black,fill=none,scale=\labelscale] () at (104.10171,-13.70525) {11};
\draw [black] (18) to (19);
\draw [black] (19) to (20);
\draw [black] (20) to (21);
\draw [black] (20) to (22);
\draw [black] (21) to (27);
\node [draw=none,black,fill=none,scale=\labelscale] () at (13.70525,-104.10171) {13};
\tkzDefPoint(-35.47990,93.49426){A}
\tkzDefPoint(35.47990,93.49426){B}
\tkzDefPoint(0.0,0.0){C}
\tkzDrawArc[<-,line width=0.9mm, red](C,B)(A)
\tkzDefPoint(41.02235,91.19850){A}
\tkzDefPoint(91.19850,41.02235){B}
\tkzDefPoint(0.0,0.0){C}
\tkzDrawArc[<-,line width=0.9mm, green](C,B)(A)
\tkzDefPoint(93.49426,35.47990){A}
\tkzDefPoint(93.49426,-35.47990){B}
\tkzDefPoint(0.0,0.0){C}
\tkzDrawArc[<-,line width=0.9mm, orange](C,B)(A)
\tkzDefPoint(91.19850,-41.02235){A}
\tkzDefPoint(41.02235,-91.19850){B}
\tkzDefPoint(0.0,0.0){C}
\tkzDrawArc[<-,line width=0.9mm, blue](C,B)(A)
\tkzDefPoint(35.47990,-93.49426){A}
\tkzDefPoint(-35.47990,-93.49426){B}
\tkzDefPoint(0.0,0.0){C}
\tkzDrawArc[->,line width=0.9mm, red](C,B)(A)
\tkzDefPoint(-41.02235,-91.19850){A}
\tkzDefPoint(-91.19850,-41.02235){B}
\tkzDefPoint(0.0,0.0){C}
\tkzDrawArc[->,line width=0.9mm, green](C,B)(A)
\tkzDefPoint(-93.49426,-35.47990){A}
\tkzDefPoint(-93.49426,35.47990){B}
\tkzDefPoint(0.0,0.0){C}
\tkzDrawArc[->,line width=0.9mm, orange](C,B)(A)
\tkzDefPoint(-91.19850,41.02235){A}
\tkzDefPoint(-41.02235,91.19850){B}
\tkzDefPoint(0.0,0.0){C}
\tkzDrawArc[->,line width=0.9mm, blue](C,B)(A)
\node [black,circle,draw,fill=white,scale=0.75,line width=1mm] (24) at (-38.26834,-92.38795) {};
\node [black,circle,draw,fill=white,scale=0.75,line width=1mm] (40) at (-38.26834,92.38795) {};
\node [black,circle,draw,fill=white,scale=0.75,line width=1mm] (41) at (92.38795,-38.26834) {};
\node [black,circle,draw,fill=white,scale=0.75,line width=1mm] (42) at (-92.38795,-38.26834) {};
\node [black,circle,draw,fill=white,scale=0.75,line width=1mm] (43) at (38.26834,92.38795) {};
\node [black,circle,draw,fill=white,scale=0.75,line width=1mm] (44) at (38.26834,-92.38795) {};
\node [black,circle,draw,fill=white,scale=0.75,line width=1mm] (45) at (-92.38795,38.26834) {};
\node [black,circle,draw,fill=white,scale=0.75,line width=1mm] (46) at (92.38795,38.26834) {};
\end{scope}
\end{tikzpicture}

%% file: 3reg_bip_frac_5_4_edges_9_5_7_11.tikz
\begin{tikzpicture}[scale=0.07]
\def\vertexscale{1.80}
\node [circle,blue,draw,scale=\vertexscale,fill=blue] (1) at (100.00000,0.00000) {};
\node [circle,black,draw,scale=\vertexscale,fill=black] (2) at (40.90861,-31.42083) {};
\node [circle,black,draw,scale=\vertexscale,fill=black] (3) at (80.90170,-58.77853) {};
\node [circle,black,draw,scale=\vertexscale,fill=black] (4) at (80.90170,58.77853) {};
\node [circle,blue,draw,scale=\vertexscale,fill=blue] (5) at (60.37616,-44.14427) {};
\node [circle,blue,draw,scale=\vertexscale,fill=blue] (6) at (30.90170,-95.10565) {};
\node [circle,blue,draw,scale=\vertexscale,fill=blue] (7) at (30.90170,95.10565) {};
\node [circle,blue,draw,scale=\vertexscale,fill=blue] (8) at (-80.90170,58.77853) {};
\node [circle,black,draw,scale=\vertexscale,fill=black] (9) at (-30.90170,95.10565) {};
\node [circle,black,draw,scale=\vertexscale,fill=black] (10) at (-30.90170,-95.10565) {};
\node [circle,black,draw,scale=\vertexscale,fill=black] (11) at (-61.84197,-44.98079) {};
\node [circle,black,draw,scale=\vertexscale,fill=black] (12) at (-100.00000,-0.00000) {};
\node [circle,blue,draw,scale=\vertexscale,fill=blue] (13) at (-42.02132,-30.70565) {};
\node [circle,blue,draw,scale=\vertexscale,fill=blue] (14) at (-80.90170,-58.77853) {};
\draw [black] (1) to (2);
\draw [black] (1) to (4);
\draw [black] (1) to (3);
\draw [black] (2) to (5);
\draw [black] (2) to (6);
\draw [black] (3) to (6);
\draw [black] (3) to (5);
\draw [black] (4) to (8);
\draw [black] (4) to (7);
\draw [black] (6) to (10);
\draw [black] (7) to (9);
\draw [black] (8) to (9);
\draw [black] (8) to (12);
\draw [black] (10) to (13);
\draw [black] (10) to (14);
\draw [black] (11) to (14);
\draw [black] (11) to (13);
\draw [black] (12) to (13);
\draw [black] (12) to (14);
\draw [->,green,dashed,line width=2mm] (5) to (9);
\draw [->,red,dashed,line width=2mm] (11) to (7);
\end{tikzpicture}

%% file: 4reg_planar_4con_frac_6_5.tikz
\begin{tikzpicture}[scale=0.07]
\def\vertexscale{0.77}
\node [circle,black,draw,scale=\vertexscale,fill=black] (1) at (5.30199,-7.23807) {};
\node [circle,black,draw,scale=\vertexscale,fill=black] (2) at (14.47774,5.94078) {};
\node [circle,black,draw,scale=\vertexscale,fill=black] (3) at (6.15159,21.32619) {};
\node [circle,black,draw,scale=\vertexscale,fill=black] (4) at (-5.60495,-7.23841) {};
\node [circle,black,draw,scale=\vertexscale,fill=black] (5) at (10.89524,-19.67864) {};
\node [circle,black,draw,scale=\vertexscale,fill=black] (6) at (24.23394,15.58562) {};
\node [circle,black,draw,scale=\vertexscale,fill=black] (7) at (14.52196,40.28584) {};
\node [circle,black,draw,scale=\vertexscale,fill=black] (8) at (-6.15160,21.32644) {};
\node [circle,black,draw,scale=\vertexscale,fill=black] (9) at (-14.47777,5.94081) {};
\node [circle,black,draw,scale=\vertexscale,fill=black] (10) at (-10.89507,-19.67887) {};
\node [circle,black,draw,scale=\vertexscale,fill=black] (11) at (13.77565,-40.19026) {};
\node [circle,black,draw,scale=\vertexscale,fill=black] (12) at (24.56256,-16.20065) {};
\node [circle,black,draw,scale=\vertexscale,fill=black] (13) at (36.54506,-0.03231) {};
\node [circle,black,draw,scale=\vertexscale,fill=black] (14) at (45.14545,39.16472) {};
\node [circle,black,draw,scale=\vertexscale,fill=black] (15) at (-14.52162,40.28577) {};
\node [circle,black,draw,scale=\vertexscale,fill=black] (16) at (-24.23391,15.58555) {};
\node [circle,black,draw,scale=\vertexscale,fill=black] (17) at (-24.56257,-16.20065) {};
\node [circle,black,draw,scale=\vertexscale,fill=black] (18) at (-13.77532,-40.19017) {};
\node [circle,black,draw,scale=\vertexscale,fill=black] (19) at (45.07301,-39.17392) {};
\node [circle,black,draw,scale=\vertexscale,fill=black] (20) at (-45.14501,39.16562) {};
\node [circle,black,draw,scale=\vertexscale,fill=black] (21) at (-36.54505,-0.03232) {};
\node [circle,black,draw,scale=\vertexscale,fill=black] (22) at (-45.07258,-39.17479) {};
\draw [black] (1) to (2);
\draw [black] (1) to (5);
\draw [black] (1) to (4);
\draw [black] (1) to (3);
\draw [black] (2) to (6);
\draw [black] (2) to (5);
\draw [black] (2) to (3);
\draw [black] (3) to (8);
\draw [black] (3) to (7);
\draw [black] (4) to (9);
\draw [black] (4) to (8);
\draw [black] (4) to (10);
\draw [black] (5) to (11);
\draw [black] (5) to (12);
\draw [black] (6) to (7);
\draw [black] (6) to (13);
\draw [black] (6) to (12);
\draw [black] (7) to (14);
\draw [black] (7) to (15);
\draw [black] (8) to (9);
\draw [black] (8) to (15);
\draw [black] (9) to (16);
\draw [black] (9) to (10);
\draw [black] (10) to (18);
\draw [black] (10) to (17);
\draw [black] (11) to (19);
\draw [black] (11) to (18);
\draw [black] (11) to (12);
\draw [black] (12) to (13);
\draw [black] (13) to (14);
\draw [black] (13) to (19);
\draw [black] (14) to (19);
\draw [black] (15) to (16);
\draw [black] (15) to (20);
\draw [black] (16) to (21);
\draw [black] (16) to (17);
\draw [black] (17) to (18);
\draw [black] (17) to (21);
\draw [black] (18) to (22);
\draw [black] (20) to (21);
\draw [black] (20) to (22);
\draw [black] (21) to (22);
\draw[->,green,dashed, line width=1.2mm]   (19) to (70.07301,-39.17392);
\draw[->,red,dashed, line width=1.2mm]   (14) to (70.07301,39.17392);
\draw[->,green,dashed, line width=1.2mm]  (-70.07301,-39.17392) to (22);
\draw[->,red,dashed, line width=1.2mm]  (-70.07301,39.17392) to (20);
\end{tikzpicture}